\def\qedd{\relax\ifmmode\hskip2em \diamond\else\unskip\nobreak\hskip1em $\diamond$\fi}
\newtheorem{theorem}{Theorem}
\newtheorem{lem}[theorem]{Lemma}
\newtheorem{cor}[theorem]{Corollary}
\newtheorem{definition}[theorem]{Definition}
\newtheorem{prob}[theorem]{Problem}
\newtheorem{exmp}[theorem]{Example}
\newtheorem{rem}[theorem]{Remark}
\newcommand{\eps}{\varepsilon}
\newcommand{\supc}{\mbox{$\sup {\rm C}$}}
\newcommand{\supC}{\mbox{$\sup {\rm C}$}}
\newcommand{\supcc}{\mbox{$\sup {\rm cC}$}}
\newcommand{\supCC}{\mbox{$\sup {\rm cC}$}}
\begin{document}
\title{Supervisory Control Synthesis of Discrete-Event Systems using Coordination Scheme}

\markboth{IEEE Transactions on automatic control,~Vol.~6, No.~1, January~2007}%
{J. Komenda \MakeLowercase{\textit{et al.}}:
Supervisory Control Synthesis of Discrete-Event Systems using Coordination Scheme}

\author{Jan~Komenda$^a$, Tom{\' a}{\v s}~Masopust$^{a,b}$, and~Jan~H.~van~Schuppen$^b$\\[8pt]
    $^a$\small Institute of Mathematics, Czech Academy of Sciences\\
        \small {\v Z}i{\v z}kova 22, 616 62 Brno, Czech Republic\\
        \small \texttt{komenda{@}ipm.cz, masopust{@}ipm.cz}\\[8pt]
    $^b$\small CWI, P.O. Box 94079, 1090 GB Amsterdam, The Netherlands\\
        \small \texttt{T.Masopust@cwi.nl, J.H.van.Schuppen@cwi.nl}\\[8pt]
}
\date{Technical Report}
\maketitle

\begin{abstract}
  Supervisory control of discrete-event systems with a global safety specification and with only local supervisors is a difficult problem. For global specifications the equivalent conditions for local control synthesis to equal global control synthesis may not be met. This paper formulates and solves a control synthesis problem for a generator with a global specification and with a combination of a coordinator and local controllers. Conditional controllability is proven to be an equivalent condition for the existence of such a coordinated controller. A procedure to compute the least restrictive solution is also provided in this paper and conditions are stated under which the result of our procedure coincides with the supremal controllable sublanguage.
\end{abstract}

\section{Introduction}
  This paper investigates the supervisory control synthesis of modular discrete-event systems with a coordinator. Discrete-event systems (DES) represented as finite-state machines have been studied by P.~J.~Ramadge and W.~M.~Wonham in \cite{RW89}. Large discrete-event systems are typically formed as a synchronous composition of a large number of local components (subsystems) that are themselves modeled by finite-state machines and run in parallel. Systems formed in this way are often called modular discrete-event systems.

  The aim of supervisory control is to ensure that the control objectives of safety and of liveness are satisfied by the closed-loop system. Specifically, the safety property means that the behavior (language) of the system must be included in a specified language, called a specification, and the liveness property means that the system cannot get to deadlock or livelock. Since only so-called controllable specification languages can be achieved, one of the key issues in supervisory control synthesis is the computation of the supremal controllable sublanguage of the given specification, from which the supervisor can then be constructed.

  From an application viewpoint, global (indecomposable) specifications are much more interesting than local specifications. Sometimes, local subsystems are independent (in the sense that their event sets are disjoint), and they are only coupled implicitly via a global specification. In the case of global specifications, it is often impossible to synthesize the supervisors locally, i.e., within a fully decentralized control architecture. In some cases it is possible to exploit the modular structure of the plant and to avoid the manipulation with the global plant. However, structural conditions on local plant languages proposed in \cite{GM04} and further weakened in \cite{KvSGM08} under which this is possible are still very restrictive.

  In this paper, another approach to deal with global specifications is introduced. It relies on the coordination control scheme proposed first in \cite{KvS08}, where a coordinator is applied for the control of modular discrete-event systems. The coordinator receives a part of the observations (events) from local subsystems and its task is to satisfy the global part of the specification and the nonblockingness. Hence, the coordinator can be seen as a two-way communication channel, where some events belonging to the coordinator event set are exchanged (communicated) between both subsystems.

  Thus, coordination control may be seen as a reasonable trade-off between a purely decentralized control synthesis, which is in some cases unrealistic, and a global control synthesis, which is naturally prohibitive for space complexity reasons. Moreover, the conditions obtained from the coordination control framework are based  on the specification itself rather than on local plants.

  In this paper, we are only concerned with the safety issue. First, we propose a necessary and sufficient condition on a specification language to be exactly achieved in the coordination control architecture that consists of a coordinator, its supervisor, and local supervisors for the subsystems. We call this condition {\em conditional controllability}, and it refines the condition that was only a sufficient one and has been presented in \cite{KvS08}. It is shown that the supremal conditionally controllable sublanguage of a given specification language always exists. In addition to the above mentioned existential result, a procedure for computation of the supremal conditionally controllable sublanguage is proposed. Finally, in the setting of this computation procedure the supremal conditionally controllable sublanguage is shown to be included in the supremal  controllable sublanguage and additional conditions are found under which both concepts coincide.

  The organization of this paper is as described below. In the next section, decentralized supervisory control of modular discrete-event systems is recalled and the coordination control approach is motivated. In Sections \ref{sec:cc} and \ref{sec:concepts} we briefly recall the coordination control framework and concepts. In Section~\ref{sec:controlsynthesis}, our first result is presented: the equivalence condition on a specification language to be exactly achieved in the coordination control architecture. In addition, we show that the supremal conditionally controllable sublanguage always exists. Then, in Section~\ref{sec:procedure}, a procedure for its computation is proposed. Finally, in Section \ref{sec:conclusion}, some concluding remarks are summarized including a discussion on future extensions of this work.

\section{Decentralized and coordination control\\ of modular discrete-event systems}\label{sec:cc}
  In this section, the elements of supervisory control theory needed in the rest of this paper are recalled. For more details, the reader is referred to lecture notes \cite{Won04} or the book \cite{CL08}. Discrete-event systems (DES) are modeled as deterministic generators that are finite-state machines with partial transition functions. A (deterministic) {\em generator\/} $G$ is a quintuple
  \[
    G=(Q,E,f,q_0,Q_m)\,,
  \]
  where $Q$ is a finite set of {\em states}, $E$ is the finite set of {\em events}, $f: Q \times E \to Q$ is the {\em partial transition function}, $q_0 \in Q$ is the {\em initial state}, and $Q_m\subseteq Q$ is the set of {\em marked states}. Recall that $f$ can be extended by induction to $f: Q \times E^* \to Q$ in the usual way. The behaviors of DES generators are defined in terms of languages. The {\em language} of $G$ is defined as $L(G) = \{s\in E^* \mid f(q_0,s)\in Q\}$, and the {\em marked language} of $G$ is defined as $L_m(G) = \{s\in E^* \mid f(q_0,s)\in Q_m\}$.

  The {\em natural projection} $P: E^* \to E_0^*$, for some $E_0\subseteq E$, is a mapping (morphism) which erases all symbols from $E\setminus E_0$ and keeps all the other symbols unchanged, i.e., it is defined so that
  \begin{itemize}
    \item $P(a)=\eps$, for $a\in E\setminus E_0$,
    \item $P(a)=a$, for $a\in E_0$, $P(\eps)=\eps$, and
    \item for $u,v\in E^*$,  $P(uv)=P(u)P(v)$.
  \end{itemize}

  The {\em inverse image} of $P$, denoted by $P^{-1} : E_0^* \to 2^{E^*}$, is defined as
  \[
    P^{-1}(a)=\{x\in E^* \mid P(x) = a\}\,.
  \]
  These definitions are naturally extended to languages.

  In what follows, given event sets $E_i$, $E_j$, $E_k$, we denote by $P^{i+j}_k$ the projection from $E_i\cup E_j$ to $E_k$, and by $P^{i}_{j\cap k}$ the projection from $E_i$ to $E_j \cap E_k$. In addition, denote $E_{i+j}=E_i \cup E_j$, for $i,j\in\{1,2,k\}$. Let $E_u\subseteq E$ be the set of uncontrollable events and denote by $E_{i,u}=E_u\cap E_i$, for $i=1,2,k$, the corresponding sets of locally uncontrollable events. Then, as mentioned above, $E_{i+j,u}$ denotes the set $E_{i+j}\cap E_u$.

  Below, modular DES are considered. First, we recall that the synchronous product (also called the parallel composition) of languages $L_1\subseteq E_1^*$ and $L_2\subseteq E_2^*$ is defined by
  \[
    L_1\| L_2=P_1^{-1}(L_1) \cap P_2^{-1}(L_2) \subseteq E^*\,,
  \]
  where $P_i: E^*\to E_i^*$, for $i=1,2$, are natural projections to local event sets.

  The synchronous product can also be defined for generators. In this case, for two generators $G_1$ and $G_2$, it is well known that $L(G_1 \| G_2) = L(G_1) \| L(G_2)$ and $L_m(G_1 \| G_2)= L_m(G_1) \| L_m(G_2)$. The reader is referred to~\cite{CL08} for more details.

  A {\em controlled generator\/} is a structure
  \[
    (G,E_c,\Gamma)\,,
  \]
  where $G$ is a generator, $E_c \subseteq E$ is the set of {\em controllable events}, $E_{u} = E \setminus E_c$ is the set of {\em uncontrollable events}, and
  \[
    \Gamma = \{\gamma \subseteq E \mid E_{u}\subseteq\gamma\}
  \]
  is the set of {\em control patterns}.

  A {\em supervisor} for the controlled generator $(G,E_c,\Gamma)$ is a map $S:L(G) \to \Gamma$.

  A {\em closed-loop system} associated with the controlled generator $(G,E_u,\Gamma)$ and the supervisor $S$ is defined as the smallest language $L(S/G) \subseteq E^*$ which satisfies
  \begin{enumerate}
    \item $\eps \in L(S/G)$,
    \item if $s \in L(S/G)$, $sa\in L(G)$, and $a \in S(s)$, then also $sa \in L(S/G)$.
  \end{enumerate}

  In the automata framework, where the supervisor is represented by a DES generator, the closed-loop system can be recast as a synchronous product of the supervisor and the plant because it follows from the form of the control patterns that the supervisor never disables uncontrollable events, i.e., all uncontrollable transitions are always enabled. This is known as {\em admissibility} of a supervisor. Hence, for an admissible supervisor $S$ that controls the plant $G$, one can write
  \[
    L(S/G) = L(S) \| L(G)\,.
  \]

  The prefix closure $\overline{L}$ of a language $L$ is the set of all prefixes of all its words. A language $L\subseteq E^*$ is said to be prefix-closed if $L=\overline{L}$.

  \begin{definition}
    Let $L$ be a prefix-closed language over an event set $E$ with the uncontrollable event set $E_u\subseteq E$. A language $K\subseteq E^*$ is {\em controllable} with respect to $L$ and $E_u$ if
    \[
      \overline{K}E_u\cap L\subseteq \overline{K}\,.
    \]
  \end{definition}

  Given a prefix-closed specification language $K\subseteq E^*$, the goal of supervisory control theory is to find a supervisor $S$ such that
  \[
    L(S/G)=K\,.
  \]
  It is known that such a supervisor exists if and only if $K$ is controllable \cite{RW89}.

  Thus, for specifications that are not controllable, controllable sublanguages are considered. The notation $\supc(K,L,E_u)$ is chosen for the supremal controllable sublanguage of $K$ with respect to $L$ and $E_u$. This supremal controllable sublanguage always exists and equals to the union of all controllable sublanguages of $K$, see e.g.~\cite{CL08}.

  A modular DES is simply a synchronous product of two or more generators. Decentralized control synthesis of a modular DES is a procedure, where the control synthesis is carried out for each module or local subsystem. The global supervisor then formally consists of the synchronous product of local supervisors although that product is not computed in practice. In terms of behaviors, the optimal global control synthesis is represented by the closed-loop language
  \[
    \supc (K,L,E_u) = \supc (\|_{i=1}^n K_i,\|_{i=1}^n L_i,E_u)\,.
  \]

  Given a rational global specification language $K\subseteq E^*$, one can theoretically always compute its supremal controllable sublanguage from which the optimal (least restrictive) supervisor can be built. Such a global control synthesis of a modular DES consists simply in computing the global plant and then the control synthesis is carried out as described above.

  Decentralized control synthesis means that the specification language $K$ is replaced by
  \[
    K_i=K\cap P_i^{-1}(L_i)
  \]
  and the synthesis is done similarly as for local specifications or using the notion of partial controllability \cite{GM04}. Note the difference with decentralized control of monolithic plants as studied in \cite{YLL02}. However, the purely decentralized control synthesis is not always possible as the sufficient conditions under which it can be used are quite restrictive. Therefore, we have proposed the coordination control in \cite{KvS08} as a trade-off between the purely decentralized control synthesis, which is in some cases unrealistic, and the global control synthesis, which is naturally prohibitive for complexity reasons.

\section{Concepts}\label{sec:concepts}
  Coordination control for DES is inspired by the concept of conditional independence of the theory of probability and of stochastic processes. Recall from \cite{KvS08} that conditional independence is roughly captured by the event set condition, when every joint action (move) of local subsystems must be accompanied by a coordinator action. In this paper, after the architecture of the coordination scheme is recalled, a new necessary and sufficient condition on a specification language to be exactly achieved in this architecture is presented.

  In the coordination scheme, first a supervisor $S_k$ for the coordinator is synthesized that takes care of the part $P_k(K)$ of the specification $K$. Then, supervisors $S_i$, for $i=1,2$, are synthesized so that the remaining parts of the specification, i.e., $P_{i+k}(K)$, are met by the new plant languages $G_i \| (S_k/G_k)$, for $i=1,2$.

  \begin{definition}\label{def:cigenerators}
    Consider three generators $G_1$, $G_2$, $G_k$. We call $G_1$ and $G_2$ {\em conditionally independent} generators given $G_k$ if there is no simultaneous move in both $G_1$ and $G_2$ without the coordinator $G_k$ being also involved. This condition can be written as
    \[
      E_r(G_1 \| G_2) \cap E_r(G_1) \cap E_r(G_2) \subseteq E_r(G_k)\,,
    \]
    where $E_r(G)$ denotes the set of all reachable symbols in $G$, see also \cite{KvS08}.
  \end{definition}

  The concept is easily extended to the case of three or more generators. The corresponding concept in terms of languages follows.

  \begin{definition}\label{def:cilanguages}
    Consider event sets $E_1$, $E_2$, $E_k$ and languages $L_1 \subseteq E_1^*$, $L_2 \subseteq E_2^*$, $L_k \subseteq E_k^*$. Languages $L_1$ and $L_2$ are said to be {\em conditionally independent} given $L_k$ if
    \[
      E_r(L_1 \| L_2 ) \cap E_1 \cap E_2 \subseteq E_k\,,
    \]
    where $E_r(L)$ is the set of all (reachable) symbols occurring in words of $L$.
  \end{definition}

  \begin{definition}
    A language $K$ is said to be {\em conditionally decomposable\/} with respect to event sets $(E_{1+k},E_{2+k},E_k)$ if
    \[
      K = P_{1+k} (K)\| P_{2+k} (K) \| P_k(K).
    \]
  \end{definition}

  It is not hard to prove that $K$ is conditionally decomposable if and only if there are languages $M_1\subseteq E_{1+k}^*$, $M_2\subseteq E_{2+k}^*$, $M_3\subseteq E_{k}^*$ such that $K=M_1\|M_2\|M_3$, see the following lemma.
  \begin{lem}\label{TCS}
    A language $M\subseteq E^*$ is conditionally decomposable with respect to event sets $(E_1,E_2,E_k)$ if and only if there exist languages $M_{i} \subseteq E_{i}^*$, $i=1,2,k$, such that $M = M_{1} \| M_{2}  \| M_k$.
  \end{lem}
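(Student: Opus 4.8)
The plan is to prove the two directions separately, with the forward (``only if'') implication being essentially a restatement of the definition and the converse (``if'') implication carrying the actual content. Throughout I would write $P_i : E^* \to E_i^*$ for the natural projection onto $E_i$, where $E = E_1 \cup E_2 \cup E_k$, so that conditional decomposability of $M$ with respect to $(E_1,E_2,E_k)$ means precisely $M = P_1(M) \| P_2(M) \| P_k(M)$. The forward direction is then immediate: if $M$ is conditionally decomposable, set $M_i = P_i(M) \subseteq E_i^*$ for $i = 1,2,k$, and the defining equation is exactly the required factorization.

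The converse is where the work lies. Assuming $M = M_1 \| M_2 \| M_k$ with $M_i \subseteq E_i^*$, I would prove $M = P_1(M)\|P_2(M)\|P_k(M)$ by establishing the two inclusions separately, relying on two elementary facts about the synchronous product. First, any language is contained in the synchronous product of its own projections once the event sets cover $E$: since $M \subseteq P_i^{-1}(P_i(M))$ for each $i$, intersecting over $i \in \{1,2,k\}$ gives $M \subseteq P_1(M)\|P_2(M)\|P_k(M)$. Second, the projection of a synchronous product onto a component event set is contained in that component: unfolding $M = P_1^{-1}(M_1)\cap P_2^{-1}(M_2)\cap P_k^{-1}(M_k)$, every $w \in M$ satisfies $P_i(w)\in M_i$, whence $P_i(M)\subseteq M_i$.

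Combining these yields the reverse inclusion. By monotonicity of the synchronous product (if $A_i \subseteq B_i$ then $A_1\|A_2\|A_k \subseteq B_1\|B_2\|B_k$, which follows from monotonicity of $P_i^{-1}$ and of intersection) together with the second fact, $P_1(M)\|P_2(M)\|P_k(M) \subseteq M_1\|M_2\|M_k = M$. Paired with the first fact this gives equality, so $M$ is conditionally decomposable and the converse is complete.

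The main obstacle here is really only bookkeeping rather than genuine difficulty: the crux is the pair of inclusions $M \subseteq P_1(M)\|P_2(M)\|P_k(M)$ and $P_i(M)\subseteq M_i$, the latter being the single place where the hypothesized factorization is used. The one point I would take care to verify is that the event sets indeed cover $E$, so that the inverse-image characterization of $\|$ applies on the ambient $E^*$, and that the monotonicity of $\|$ is invoked cleanly; beyond that, no nontrivial estimate or construction is expected.
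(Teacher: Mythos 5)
Your proof is correct and follows essentially the same route as the paper: the forward direction takes $M_i = P_i(M)$, and the converse combines the two inclusions $M \subseteq P_1(M)\|P_2(M)\|P_k(M)$ (from $M \subseteq P_i^{-1}P_i(M)$) and $P_i(M)\subseteq M_i$ (from the hypothesized factorization, via monotonicity of the synchronous product). Your additional care about $E = E_1\cup E_2\cup E_k$ and the explicit monotonicity step merely spell out details the paper leaves implicit.
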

  \begin{proof}
    Conditionally decomposability means that $M=P_1(M)\| P_2(M)\| P_k(M)$. Let $M_i=P_i(M)$, for $i=1,2,k$. Then the sufficiency is proven. To prove the necessity, assume that there are languages $M_{i} \subseteq E_{i}^*$, for $i=1,2,k$, such that $M = M_{1} \| M_{2}  \| M_k$. Obviously, $P_i(M)\subseteq M_i$, for $i=1,2,k$, which implies the inclusion $P_k(M)\|P_1(M)\|P_2(M)\subseteq M$. As it holds that $M\subseteq P_i^{-1}P_i(M)$, for $i=1,2,k$, the definition of synchronous product implies that $M\subseteq P_k(M)\|P_1(M)\|P_2(M)$. Hence, the lemma holds.
  \end{proof}

\section{Control synthesis of conditionally controllable languages} \label{sec:controlsynthesis}
  \begin{prob}\label{problem:controlsynthesis}
    Consider generators $G_1$, $G_2$, $G_k$ with event sets $E_1$, $E_2$, $E_k$, respectively, and a prefix-closed specification language
    \[
      K\subseteq L(G_1\| G_2 \| G_k)\,.
    \]

    We assume that $K$ is prefix-closed because we only focus on controllability issues in this paper, while nonblocking issues will be addressed in a future work.

    Assume that the coordinator $G_k$ makes the two generators $G_1$ and $G_2$ conditionally independent, and that the specification language $K$ is conditionally decomposable with respect to event sets $(E_{1+k},E_{2+k},E_k)$.

    The overall control task is divided into local subtasks and the coordinator subtask. The coordinator takes care of its ``part'' of the specification, namely $P_k(K)$. Otherwise stated, $S_k$ must be such that
    \[
      L(S_k/G_k)\subseteq P_k(K)\,.
    \]

    Similarly, supervisors $S_1$ and $S_2$ take care of their corresponding ``parts'' of the specification, namely $P_{i+k}(K)$, for $i=1,2$. Otherwise stated, $S_i$ must be such that
    \[
      L(S_i/ [G_i \| (S_k/G_k) ])\subseteq P_{i+k}(K)\,,
    \]
    for $i=1,2$.

    The aim is to determine supervisors $S_1$, $S_2$, and $S_k$ for the respective generators so that the closed-loop system with the coordinator is such that
    \begin{gather*}
       L(S_1/ [G_1 \| (S_k/G_k) ]) \| L(S_2/ [G_2 \| (S_k/G_k) ]) \| L(S_k/G_k)
        = K\,.\qedd
    \end{gather*}
  \end{prob}

  \begin{definition}\label{def:conditionalcontrollability}
    Consider the setting of Problem \ref{problem:controlsynthesis}. We call the specification language $K \subseteq E^*$ {\em conditionally controllable} for generators $(G_1,G_2,G_k)$ and locally uncontrollable event sets $(E_{1+k,u},E_{2+k,u},E_{k,u})$ if
    \begin{enumerate}
      \item[(i)]
        $P_k(K) \subseteq E_k^*$ is controllable with respect to $L(G_k)$ and $E_{k,u}$; equivalently,
        \begin{align*}
          P_k(K) E_{k,u} \cap L(G_k) \subseteq P_k(K)\,.
        \end{align*}

      \item[(ii.a)]
        the language $P_{1+k}(K) \subseteq (E_1 \cup E_k)^*$ is controllable with respect to the language $L(G_1) \| P_k(K) \| P_k^{2+k} (L(G_2) \| P_k(K))$ and $E_{1+k,u}$; equivalently,
        \begin{align*}
          P_{1+k}(K) E_{1+k,u} \cap L(G_1) \| P_k(K) \| P_k^{2+k} (L(G_2) \| P_k(K)) \subseteq P_{1+k}(K)\,.
        \end{align*}

      \item[(ii.b)]
        the language $P_{2+k}(K) \subseteq (E_2 \cup E_k)^*$ is controllable with respect to the language $L(G_2) \| P_k(K) \| P_k^{1+k} (L(G_1) \| P_k(K))$ and $E_{2+k,u}$; equivalently,
        \begin{align*}
          P_{2+k}(K) E_{2+k,u} \cap L(G_2) \| P_k(K) \| P_k^{1+k} (L(G_1) \| P_k(K)) \subseteq P_{2+k}(K)\,.
        \end{align*}
    \end{enumerate}
  \end{definition}

  The interpretation of the term after the intersection in (ii.a) is that the effect of the subsystem $G_1$ in combination with the controlled coordinator system $G_2 \| P_k(K)$ has to be taken into account when checking conditional controllability.

  Since $P_k(K)$ is controllable with respect to $L(G_k)$ and $E_{k,u}$, there exists a supervisor $S_k$ such that
  \[
    P_k(K) = L(S_k/G_k)\,.
  \]

  Note that the conditions of Definition \ref{def:conditionalcontrollability} can be checked by classical algorithms with low (polynomial) computational complexity for verification of controllability as is directly clear from the definition.

  However, the complexity of checking conditional controllability is much less than that for the global system $G_1 \| G_2 \|G_k$. This is because instead of checking controllability with the global specification and the global system, we check it only on the corresponding projections to $E_{1+k}$ and $E_{2+k}$. The projections are smaller when they satisfy the observer property (see Definition~\ref{def5} below).

\subsection{Auxiliary results}
  In this section, we present several auxiliary results that are useful in the rest of this paper. First, let us recall the following result proven in \cite{Won04} showing when a natural projection can be distributed over a synchronous product.

  \begin{lem}\label{lemma:Wonham}
    Let $E_k\subseteq E=E_1\cup E_2$ be event sets such that $E_1\cap E_2\subseteq E_k$. Let $L_1\subseteq E_1^*$ and $L_2\subseteq E_2^*$ be local languages. Let $P_k : E^*\to E_k^*$ be a natural projection, then
    \[
      P_k(L_1\| L_2)=P_{1\cap k}^1 (L_1) \| P_{2\cap k}^2 (L_2)\,,
    \]
    where $P^i_{i\cap k} : E_i^* \to E_k^*$, for $i=1,2$.
  \end{lem}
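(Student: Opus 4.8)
The plan is to prove the two inclusions separately, after first pinning down the alphabets and recording one commutation identity between projections. Writing $A_i=E_i\cap E_k$, the two factors on the right-hand side live over $A_1$ and $A_2$, and their synchronous product is formed over $A_1\cup A_2$. The hypotheses give $A_1\cup A_2=(E_1\cup E_2)\cap E_k=E_k$ and $A_1\cap A_2=E_1\cap E_2\cap E_k=E_1\cap E_2$, so both sides of the claimed identity are languages over $E_k$, and the events shared by the two factors on the right are exactly $E_1\cap E_2$. Let $R_i:E_k^*\to A_i^*$ denote the natural projections implicit in that synchronous product, so that $P^1_{1\cap k}(L_1)\|P^2_{2\cap k}(L_2)=R_1^{-1}(P^1_{1\cap k}(L_1))\cap R_2^{-1}(P^2_{2\cap k}(L_2))$. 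The elementary fact I would use repeatedly is the commutation identity
\[
  R_i \circ P_k = P^i_{i\cap k}\circ P_i, \qquad i=1,2,
\]
both sides being the projection $E^*\to A_i^*$ that erases every symbol outside $E_i\cap E_k$, which I would verify by checking its action on a single letter.

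For the inclusion $\subseteq$, which does not really use the special position of $E_k$, I would take $s\in L_1\|L_2$ and put $w=P_k(s)$. Since $P_i(s)\in L_i$, the commutation identity gives $R_i(w)=R_i(P_k(s))=P^i_{i\cap k}(P_i(s))\in P^i_{i\cap k}(L_i)$ for $i=1,2$, whence $w$ lies in the synchronous product on the right.

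The substantial direction is $\supseteq$, where the hypothesis $E_1\cap E_2\subseteq E_k$ is indispensable. Given $w$ in the right-hand side I would choose witnesses $t_1\in L_1$ and $t_2\in L_2$ with $P^i_{i\cap k}(t_i)=R_i(w)$, and then construct a single word $s\in E^*$ with $P_1(s)=t_1$, $P_2(s)=t_2$, and $P_k(s)=w$; such an $s$ automatically lies in $L_1\|L_2=P_1^{-1}(L_1)\cap P_2^{-1}(L_2)$ and projects onto $w$, finishing the proof. To build $s$ I would exploit that every event shared by the subsystems is a coordinator event: since $E_1\cap E_2\subseteq A_1\cap A_2$, the symbols of $E_1\cap E_2$ occurring in $t_1$, in $t_2$, and in $w$ form, in each case, the same sequence (the $E_1\cap E_2$-subsequence of $w$). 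Hence $t_1$, $t_2$, and $w$ can be cut into matching ``cells'' delimited by these shared symbols, and within each cell the remaining symbols of $t_1$ lie in $E_1\setminus E_2$ while those of $t_2$ lie in $E_2\setminus E_1$, so they use disjoint alphabets and may be interleaved freely; the interleaving in each cell is chosen so that its $E_k$-symbols come out in the order prescribed by the corresponding cell of $w$, with the private symbols outside $E_k$ inserted arbitrarily.

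The main obstacle is precisely this interleaving construction: one must check that the cells genuinely match up, which is exactly where $E_1\cap E_2\subseteq E_k$ rules out any synchronization conflict between the two factors, and that the merged word $s$ meets all three projection equalities at once. I would phrase the merge cleanly by induction on the total number of symbols of $w$ together with the non-$E_k$ symbols of $t_1$ and $t_2$, peeling off one leading symbol at a time according to its type ($E_1\setminus E_k$, $E_2\setminus E_k$, or $E_k$), which reduces the verification of the three projection identities to the trivial one-letter base case.
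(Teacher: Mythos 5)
Your proof is correct, but there is nothing in the paper to compare it against line by line: the paper states this lemma as a known result imported from Wonham's lecture notes and gives no proof, so your argument is a self-contained replacement rather than a variant of the paper's route. The substance checks out. Your alphabet bookkeeping is right and is genuinely needed: with $A_i=E_i\cap E_k$ the hypotheses give $A_1\cup A_2=E_k$ and $A_1\cap A_2=E_1\cap E_2$, so the right-hand side must be read as the synchronous product over $E_k$ of languages over $A_1$ and $A_2$ (reading it as a plain intersection of languages over $E_k$ would make the statement false, e.g.\ for $E_1=\{a\}$, $E_2=\{b\}$, $E_k=\{a,b\}$, $L_1=\{a\}$, $L_2=\{b\}$). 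The commutation identity $R_i\circ P_k=P^i_{i\cap k}\circ P_i$ is immediate letter by letter and gives the inclusion $\subseteq$, which, as you say, needs no hypothesis. For $\supseteq$, your closing peeling induction is the right formalization and does go through: peeling a leading symbol of $t_1$ from $E_1\setminus E_k$ preserves both consistency conditions and leaves $P_2$ and $P_k$ untouched precisely because $E_1\cap E_2\subseteq E_k$ forces $E_1\setminus E_k$ to be disjoint from $E_2$ (and symmetrically), while if neither $t_1$ nor $t_2$ leads with a private symbol, the head of $w$ determines the heads of $t_1$ and/or $t_2$ (a shared symbol of $E_1\cap E_2=A_1\cap A_2$ necessarily heads all three words), so the words peel synchronously down to the empty base case. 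One phrase in your cell description is too loose: the private symbols of $t_1$ cannot be inserted ``arbitrarily'' within a cell, since $P_1(s)=t_1$ requires them to keep their relative order with respect to the $E_k$-symbols of $t_1$ in that cell; but your induction enforces this automatically by always removing leading symbols, so the slip is cosmetic rather than a gap.
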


  An immediate consequence of Lemma~\ref{lemma:Wonham} and the definition of synchronous product is the following lemma proven in~\cite{FLT}.
  \begin{lem}\label{lemF}
    Let $E_k\subseteq E=E_1\cup E_2$ be event sets such that $E_k = E_1\cap E_2$. Let $L_1\subseteq E_1^*$ and $L_2\subseteq E_2^*$ be local languages. Let $P_i : E^*\to E_i^*$ and $P^j_k:E_j^*\to E_k^*$ be natural projections, for $i=1,2,k$ and $j=1,2$. Then, for $\{i,j\}=\{1,2\}$,
    \[
      P_i(L_1 \| L_2)= L_i \cap (P^i_k)^{-1}P^j_k(L_j)\,.
    \]
  \end{lem}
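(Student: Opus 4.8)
The plan is to derive the identity in two short steps, from Lemma~\ref{lemma:Wonham} together with the definition of the synchronous product; there is no genuinely hard part here, so essentially all of the work lies in keeping the alphabets straight. Fix $\{i,j\}=\{1,2\}$.

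First I would apply Lemma~\ref{lemma:Wonham} with the target alphabet taken to be $E_i$ itself (that is, I substitute $E_i$ for the ``$E_k$'' appearing in that lemma). This is legitimate because the hypothesis of that lemma, namely $E_1\cap E_2\subseteq E_i$, holds trivially. Since in the present setting $E_k=E_1\cap E_2$ by assumption, the two projections produced by Lemma~\ref{lemma:Wonham} simplify considerably: the projection of $L_i$ onto $E_i\cap E_i=E_i$ is the identity, and the projection of $L_j$ onto $E_j\cap E_i=E_1\cap E_2=E_k$ is precisely $P^j_k$. Hence the lemma gives
\[
  P_i(L_1\|L_2)=L_i\,\|\,P^j_k(L_j)\,.
\]

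Second, I would expand the synchronous product on the right by its definition. The language $P^j_k(L_j)$ lives over $E_k$, and because $E_k=E_1\cap E_2\subseteq E_i$, the joint alphabet of the product $L_i\|P^j_k(L_j)$ is $E_i\cup E_k=E_i$. Applying the definition of the synchronous product over the alphabet $E_i$ then intersects $L_i$ (whose projection onto $E_i$ is the identity) with the $(P^i_k)^{-1}$-preimage of $P^j_k(L_j)$, i.e.
\[
  L_i\,\|\,P^j_k(L_j)=L_i\cap (P^i_k)^{-1}P^j_k(L_j)\,,
\]
which is exactly the claimed equality. The identical computation with the roles of $1$ and $2$ interchanged disposes of the other choice of $\{i,j\}$, so the lemma holds for $\{i,j\}=\{1,2\}$.

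The only place calling for care --- and the single point at which a reader might slip --- is the second step: one must apply the definition of the synchronous product over the alphabet $E_i$ rather than over the global alphabet $E$, using the inclusion $E_k\subseteq E_i$ to see that the large preimage $P_i^{-1}$ collapses to the identity on $L_i$ and that the relevant synchronizing projection is $P^i_k:E_i^*\to E_k^*$. Everything else is a direct substitution of the equalities $E_i\cap E_i=E_i$ and $E_i\cap E_j=E_k$.
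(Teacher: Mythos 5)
Your proof is correct and takes essentially the route the paper itself indicates: the paper gives no detailed argument for Lemma~\ref{lemF}, stating only that it is ``an immediate consequence of Lemma~\ref{lemma:Wonham} and the definition of synchronous product'' (with the proof credited to \cite{FLT}), and your two steps---applying Lemma~\ref{lemma:Wonham} with target alphabet $E_i$ (whose hypothesis $E_1\cap E_2\subseteq E_i$ holds trivially, yielding $P_i(L_1\|L_2)=L_i\|P^j_k(L_j)$) and then expanding that product over the alphabet $E_i\cup E_k=E_i$ so that $P_i^{-1}$ collapses to the identity---fill in exactly those details. No gaps.
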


  \begin{lem}\label{simple}
    Let $L\subseteq E^*$ be a language and $P_k: E^*\to E_k^*$ be a natural projection with $E_k\subseteq E$, for some event set $E$. Then
    \[
      L\| P_k(L)=L\,.
    \]
  \end{lem}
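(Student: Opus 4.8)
The plan is to unwind the definition of the synchronous product and then invoke the elementary containment $L\subseteq P_k^{-1}P_k(L)$, which holds for any language and any projection. The one point that deserves care at the outset is identifying the common alphabet over which the synchronous product is formed. Here $L\subseteq E^*$ while $P_k(L)\subseteq E_k^*$, and since $E_k\subseteq E$ the union of the two alphabets is just $E$. Consequently the projection associated with the first factor is the projection $E^*\to E^*$, which is the identity, and the projection associated with the second factor is $P_k:E^*\to E_k^*$.

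With that observation, the definition of synchronous product gives
\[
  L\,\|\,P_k(L)=\mathrm{id}^{-1}(L)\cap P_k^{-1}(P_k(L))=L\cap P_k^{-1}(P_k(L))\,.
\]
So the whole statement reduces to showing that the intersection on the right equals $L$, i.e. to showing $L\subseteq P_k^{-1}(P_k(L))$.

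First I would verify the inclusion $L\subseteq P_k^{-1}(P_k(L))$. This is a standard property of natural projections: for any word $w\in L$ we have $P_k(w)\in P_k(L)$ by definition of the image, hence $w\in P_k^{-1}(P_k(L))$ by definition of the inverse image. Since this holds for every $w\in L$, the inclusion follows, and therefore $L\cap P_k^{-1}(P_k(L))=L$. Combining this with the displayed computation of the synchronous product yields $L\,\|\,P_k(L)=L$, as required.

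Honestly, there is no genuine obstacle in this lemma; its only content is bookkeeping about which alphabet and which projections enter the synchronous product. The single place where a reader could stumble is the implicit convention that $\|$ is taken over the combined alphabet $E\cup E_k=E$, so that the ``first'' projection collapses to the identity; once that is made explicit the result is immediate from the projection containment $L\subseteq P_k^{-1}P_k(L)$.
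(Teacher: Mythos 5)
Your proof is correct and follows essentially the same route as the paper's: both unwind the synchronous product as $L \cap P_k^{-1}P_k(L)$ (the paper leaves the identity-projection step implicit) and then conclude from the standard containment $L \subseteq P_k^{-1}P_k(L)$. Your explicit remark that the combined alphabet is $E_k \cup E = E$, so the first factor's projection is the identity, is the only added detail, and it is a faithful expansion of the paper's one-line argument.
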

  \begin{proof}
    As $L\subseteq P_k^{-1}P_k(L)$, we obtain by the definition of the synchronous product that $L\| P_k(L)=L\cap P_k^{-1}P_k(L)=L$.
  \end{proof}

\subsection{Control synthesis of conditionally controllable\\ languages}
  The following theorem presents the necessary and sufficient condition on a specification language to be exactly achieved in the coordination control architecture.

  \begin{theorem}\label{th:controlsynthesissafety}
    Consider the setting of Problem \ref{problem:controlsynthesis}. There exist supervisors $S_1$, $S_2$, $S_k$ such that
    \begin{equation}\label{eq:controlsynthesissafety}
      \begin{aligned}
        L(S_1/[G_1 \| (S_k/G_k)]) ~ & \| ~ L(S_2/[G_2 \| (S_k/G_k)])
        ~ \| ~ L(S_k/G_k)) ~ = ~ K
      \end{aligned}
    \end{equation}
    if and only if the specification $K$ is conditionally controllable for generators $(G_1,G_2,G_k)$ and locally uncontrollable event sets $(E_{1+k,u},E_{2+k,u},E_{k,u})$.
  \end{theorem}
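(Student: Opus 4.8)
The plan is to reduce the theorem to the Ramadge--Wonham controllability theorem applied three times, once for each supervisor. I rely on two standard facts: a prefix-closed specification is realizable exactly on a plant $G$ if and only if it is controllable with respect to $L(G)$, and every closed-loop language $L(S/G)$ is itself controllable with respect to $L(G)$. The whole argument then rests on identifying the three achieved languages $L(S_k/G_k)$ and $L(S_i/[G_i\|(S_k/G_k)])$ with the projections $P_k(K)$ and $P_{i+k}(K)$, and on recognising that the plant faced by $S_i$ is $L(G_i)\|P_k(K)$.

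For the necessity direction I would start from the closed-loop equality $R_1\|R_2\|L_k=K$, writing $L_k=L(S_k/G_k)$ and $R_i=L(S_i/[G_i\|(S_k/G_k)])$. Projecting this identity onto $E_k$ and using only the membership characterisation of the synchronous product, every word of the product has its $E_k$-image in $L_k$, so $P_k(K)\subseteq L_k$; together with $L_k\subseteq P_k(K)$ coming from the setting of Problem~\ref{problem:controlsynthesis} this forces $L_k=P_k(K)$. The same projection argument onto $E_{1+k}$ and $E_{2+k}$ forces $R_i=P_{i+k}(K)$. Now (i) is immediate because $L_k=P_k(K)$ is a closed-loop language on $G_k$, hence controllable with respect to $L(G_k)$. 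Likewise $R_i=P_{i+k}(K)$ is controllable with respect to the plant $L(G_i\|(S_k/G_k))=L(G_i)\|P_k(K)$; since the plant appearing in (ii.a)/(ii.b) is a subset of this one (it is $L(G_i)\|P_k(K)$ synchronised with a further language over $E_k$), controllability with respect to the larger plant yields controllability with respect to the smaller one, which is exactly (ii.a) and (ii.b).

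For the sufficiency direction I would first use (i) to pick $S_k$ with $L(S_k/G_k)=P_k(K)$, so that the plant for $S_i$ becomes $L(G_i)\|P_k(K)$. The essential step is to show that (ii.a)/(ii.b), stated with respect to the smaller ``special'' plant, already deliver controllability of $P_{i+k}(K)$ with respect to the full local plant $L(G_i)\|P_k(K)$. Concretely, using Wonham's projection lemma (Lemma~\ref{lemma:Wonham}), which applies because conditional independence gives $E_1\cap E_2\subseteq E_k$, one computes $P_k^{2+k}(L(G_2)\|P_k(K))=P_{2\cap k}^2(L(G_2))\|P_k(K)$; and because $K\subseteq L(G_1\|G_2\|G_k)$ forces $P_{2\cap k}(K)\subseteq P_{2\cap k}^2(L(G_2))$, every word of $L(G_1)\|P_k(K)$ already satisfies this extra $E_2\cap E_k$ constraint, so the special plant collapses to $L(G_1)\|P_k(K)$ (and symmetrically for index $2$). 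Hence (ii.a)/(ii.b) give controllability of $P_{i+k}(K)$ with respect to $L(G_i)\|P_k(K)$, and since $P_{i+k}(K)\subseteq L(G_i)\|P_k(K)$ one obtains supervisors $S_i$ with $L(S_i/[G_i\|(S_k/G_k)])=P_{i+k}(K)$. Finally, conditional decomposability of $K$ with respect to $(E_{1+k},E_{2+k},E_k)$ gives $P_{1+k}(K)\|P_{2+k}(K)\|P_k(K)=K$, which is precisely the required equality~\eqref{eq:controlsynthesissafety}.

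The step I expect to be the main obstacle is this last coincidence of the ``special'' plant in (ii.a)/(ii.b) with the genuine local plant $L(G_i)\|P_k(K)$. It is exactly here that both standing assumptions are consumed: conditional independence is needed to invoke Lemma~\ref{lemma:Wonham}, and $K\subseteq L(G_1\|G_2\|G_k)$ is needed to make the extra $E_2\cap E_k$ synchronisation vacuous. Without this coincidence one would only obtain controllability with respect to a strictly smaller plant, which is too weak to realise $P_{i+k}(K)$ exactly on $G_i\|(S_k/G_k)$, and the global product could fall short of $K$; the careful projection bookkeeping establishing the collapse is therefore the technical heart of the proof.
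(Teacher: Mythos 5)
Your proof is correct, but it takes a genuinely different route from the paper's. The paper never collapses the ``special'' plant of (ii.a)/(ii.b) to the genuine local plant $L(G_i)\|P_k(K)$: in its sufficiency part it realizes each $P_{i+k}(K)$ \emph{exactly on the special plant} $G_i\|(S_k/G_k)\|P_k^{j+k}(G_j\|(S_k/G_k))$, $j\neq i$, and then shows, by commuting components of the global synchronous product and absorbing the resulting self-projections via Lemma~\ref{simple} ($L\|P_k(L)=L$), that the product of the two special closed loops equals $L(S_1/[G_1\|(S_k/G_k)])\|L(S_2/[G_2\|(S_k/G_k)])$, concluding with conditional decomposability; in its necessity part it derives, through a chain of inclusions forced to be equalities, that $S_1$ achieves $P_{1+k}(K)$ exactly on the special plant and invokes the controllability theorem with that plant directly. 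Your collapse claim is nevertheless valid: by Lemma~\ref{lemma:Wonham} (whose side condition for this application is just $E_2\cap E_k\subseteq E_k$, so conditional independence is not actually needed at this step), $P_k^{2+k}(L(G_2)\|P_k(K))=P^2_{2\cap k}(L(G_2))\|P_k(K)$, and $K\subseteq P_2^{-1}(L(G_2))$ gives $P_{2\cap k}(K)\subseteq P^2_{2\cap k}(L(G_2))$, so every word of $L(G_1)\|P_k(K)$ already meets the extra constraint and the special plant equals $L(G_1)\|P_k(K)$; the antitonicity of controllability in the plant that your necessity uses ($\overline{N}E_u\cap M'\subseteq \overline{N}E_u\cap M\subseteq\overline{N}$ for $M'\subseteq M$) is likewise sound, as is the membership argument giving $P_{i+k}(K)\subseteq R_i$ and $P_k(K)\subseteq L(S_k/G_k)$, combined with the standing inclusions of Problem~\ref{problem:controlsynthesis}. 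Two points to tighten: you assert but do not prove $P_{i+k}(K)\subseteq L(G_i)\|P_k(K)$, which the Ramadge--Wonham theorem requires (it follows from $P_{i+k}(K)\subseteq(P^{i+k}_i)^{-1}(L(G_i))$ and $P_{i+k}(K)\subseteq(P^{i+k}_k)^{-1}P_k(K)$, essentially the same computation as your collapse); and conditional independence as defined only bounds \emph{reachable} shared events, so $E_1\cap E_2\subseteq E_k$ is a mild strengthening---though the paper invokes Lemma~\ref{lemma:Wonham} under the same gloss, so you are no less rigorous than the source. What your route buys: it exposes the cross term in (ii.a)/(ii.b) as redundant whenever $K\subseteq L(G_1\|G_2\|G_k)$, it delivers the stronger per-supervisor equalities $L(S_i/[G_i\|(S_k/G_k)])=P_{i+k}(K)$ (so the inclusions demanded in Problem~\ref{problem:controlsynthesis} hold automatically, which the paper's sufficiency does not by itself guarantee), and it anticipates the simplified plants $L_i\|\supC_k$ that the paper itself adopts later in Theorem~\ref{thm2} and Corollary~\ref{cor1}; the paper's route, by contrast, works verbatim with the definition's special plants and needs no collapse argument at all.
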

  \begin{proof}
    To prove the sufficiency, let the specification language $K$ be conditionally controllable for generators $(G_1,G_2,G_k)$ and locally uncontrollable event sets $(E_{1+k,u},E_{2+k,u},E_{k,u})$. It must be checked that (\ref{eq:controlsynthesissafety}) holds.

    However, as
    \[
      K\subseteq L(G_1\|G_2\|G_k) \Rightarrow P_k(K) \subseteq L(G_k)\,,
    \]
    and $P_k(K)$ is controllable with respect to $L(G_k)$ and $E_{k,u}$, it follows from \cite{RW87} that there exists a supervisor $S_k$ over the event set $E_k$ such that
    \[
      L(S_k/G_k) = P_k(K)\,.
    \]

    Next, consider the language
    \begin{multline*}
      L(G_1) ~ \| ~ L(S_k/G_k) ~ \cap ~ (P_k^{1+k})^{-1} P_k^{2+k} L(G_2 \| (S_k/G_k))\\
       = L(G_1) ~ \| ~ L(S_k/G_k) ~ \| ~ P_k^{2+k} L(G_2 \| (S_k/G_k))\,,
    \end{multline*}
    by the definition of the synchronous product. Furthermore,
    \begin{align*}
      K & \subseteq L(G_1\| G_2\| G_k) \\
        & \Rightarrow \\
      P_{1+k}(K) & \subseteq  P_{1+k}L(G_1\| G_2\| G_k) \\
        & = P_{1+k}(L(G_1)\|L(G_k)) ~ \| ~ P_{k \cap 2}^2 L(G_2)\,, ~ ~ ~ ~ \text{by Lemma~\ref{lemma:Wonham},}\\
        & = L(G_1) \| L(G_k) \| P_{k \cap 2}^2 L(G_2)\,.
    \end{align*}
    Then,
    \begin{align*}
      P_{1+k}(K) & \subseteq L(G_1) \|  P_{k \cap 2}^2 L(G_2) \|  L(G_k)
      \text{ and}\\
      P_{1+k}(K) & \subseteq  (P_{k}^{1+k})^{-1} P_k(K) \\
      & \Rightarrow  \\
        P_{1+k}(K)
      & \subseteq  L(G_1) \|  P_{k \cap 2}^2 L(G_2) \|  L(G_k) \|  P_k(K) \\
      & =  L(G_1) \|  P_{k \cap 2}^2 L(G_2) \|  L(G_k) \|  L(S_k/G_k) \\
      & =  L(G_1) \|  P_{k \cap 2}^2 L(G_2) \|  L(S_k/G_k)\,,\\
        & \hspace{5.4cm}\text{by $L(G_k)\|L(S_k/G_k) = L(S_k/G_k)\,,$}\\
      & =  L(G_1) \|  P_{k \cap 2}^2 L(G_2) \|  L(S_k/G_k) \| P_k(K) \\
      & =  L(G_1) \|  L(S_k/G_k) \|  P_{k}^{2+k} L(G_2 \| S_k/G_k))\,,\\
        & \hspace{3,5cm} \text{by $P_{k \cap 2}^2 L(G_2) \| P_k(K) = P_k^{2+k} L(G_2 \| (S_k/G_k))\,.$}
    \end{align*}

    From the above relations and the assumption that the system is conditionally controllable then follows that there exists a supervisor $S_1$ such that
    \[
      L(S_1/ [ G_1 \|  (S_k/G_k) \|  P_k^{2+k} (G_2 \|  (S_k/G_k))])
      = P_{1+k}(K)\,.
    \]
    Because of Condition (ii.b) of Definition \ref{def:conditionalcontrollability}, a similar argument shows that there exists a supervisor $S_2$ such that
    \[
      L(S_2/ [ G_2 \| (S_k/G_k) \| P_k^{1+k} (G_1 \| (S_k/G_k))])
      = P_{2+k}(K)\,.
    \]
    In addition,
    \begin{multline}\label{lab_k}
      L(S_i/ [ G_i \| (S_k/G_k) \| P_k^{i+k} (G_i \| (S_k/G_k))])\\
      \begin{aligned}
        & = L(S_i) \| L(G_i \| (S_k/G_k)) \| P_k^{i+k} L(G_i \| (S_k/G_k))\\
        & = L(S_i) \| L(G_i \| (S_k/G_k)), ~ ~ ~ ~ ~ ~ ~ ~ ~ ~\text{by Lemma~\ref{simple},}\\
        & = L(S_i/[G_i \| (S_k/G_k)])\,,
      \end{aligned}
    \end{multline}
    which follows from the properties of the synchronous product. It is now sufficient to notice that
    \begin{multline*}
         L(S_1/ [ G_1 \| (S_k/G_k) \| P_k^{2+k} (G_2 \| (S_k/G_k))])\\
     \| ~
        L(S_2/ [ G_2 \| (S_k/G_k) \| P_k^{1+k} (G_1 \| (S_k/G_k))])
    \end{multline*}
    can be rewritten using the commutativity of the synchronous product exchanging the third and the last component as
    \begin{multline*}
      L(S_1) ~ \| ~ L(G_1 \| (S_k/G_k)) ~ \| ~ P_k^{1+k} L(G_1 \| (S_k/G_k))\\
      ~ \| ~ L(S_2) ~ \| ~ L(G_2 \| (S_k/G_k)) ~ \| ~ P_k^{2+k} L(G_2 \| (S_k/G_k))
    \end{multline*}
    which is reduced, using $(\ref{lab_k})$, to
    \begin{multline*}
      L(S_1) ~ \| ~ L(G_1 \| (S_k/G_k)) ~ \| ~ L(S_2) ~ \| ~ L(G_2 \| (S_k/G_k))\\
      = L(S_1/[G_1 \| (S_k/G_k)]) ~ \| ~ L(S_2/[G_2 \| (S_k/G_k)])\,.
    \end{multline*}
    Finally, since $K$ is conditionally decomposable and equalities
    \begin{align*}
      P_{1+k}(K) & =  L(S_1/ [ G_1 \| (S_k/G_k) \| P_k^{2+k} (G_2 \| (S_k/G_k))])\\
      P_{2+k}(K) & =  L(S_2/ [ G_2 \| (S_k/G_k) \| P_k^{1+k} (G_1 \| (S_k/G_k))])\\
      P_k(K) & =  L(S_k/G_k)
    \end{align*}
    are proven above, it follows that
    \begin{gather*}
      L(S_1/[G_1 \| (S_k/G_k)]) \| L(S_2/[G_2 \| (S_k/G_k)]) \| L(S_k/G_k))\\
        \begin{aligned}
          = &~ P_{1+k}(K) \| P_{2+k}(K) \| P_k(K)
          = K\,.
        \end{aligned}
    \end{gather*}
    Thus, the sufficiency is proven.

    To prove the necessity, projections $P_k$, $P_{1+k}$, $P_{2+k}$ will be applied to Equality~(\ref{eq:controlsynthesissafety}). Let us recall that since all the supervisors are admissible, the closed-loop languages can be written as corresponding synchronous products. This means that (\ref{eq:controlsynthesissafety}) can be rewritten as follows.
    \begin{align*}
      K & = L(S_1) \| L(G_1) \| L(S_k) \| L(G_k) \| L(S_2) \| L(G_2) \| L(S_k) \| L(G_k) \| L(S_k) \| L(G_k)\\
        & = L(S_1) \| L(G_1) \| L(S_2) \| L(G_2) \| L(S_k) \| L(G_k)\,,
    \end{align*}
    which yields after projecting by $P_k$
    \begin{align*}
      P_k(K)  & = P_k(L(S_1) \| L(G_1) \| L(S_2) \| L(G_2) \| L(S_k) \| L(G_k))\\
              & = L(S_k) \| L(G_k) \cap P_k(L(S_1) \| L(G_1) \| L(S_2) \| L(G_2))\\
              & \subseteq L(S_k) \| L(G_k)\\
              & = L(S_k/G_k)\,.
    \end{align*}

    On the other hand, we always have  $L(S_k/G_k) \subseteq P_k(K)$ because $S_k$ is a supervisor that enforces the coordinator part of the specification $P_k(K)$. Hence, we have that
    \[
      L(S_k/G_k) = P_k(K)\,,
    \]
    which means according to the basic controllability theorem of supervisory control that $P_k(K) \subseteq E_k^*$ is controllable with respect to $L(G_k)$ and $E_{k,u}$, i.e., (i) of the definition of conditional controllability is satisfied.

    Now, (ii.a) of conditional controllability will be shown; (ii.b) is a symmetric condition. The application of $P_{1+k}$ to (\ref{eq:controlsynthesissafety}) yields
    \begin{align*}
      P_{1+k} \Bigl( L(S_k/G_k) & \| L(S_1/ [G_1 \| (S_k/G_k)]) \| L(S_2/ [ G_2 \| (S_k/G_k)]) \Bigr) = P_{1+k}(K)\,.
    \end{align*}
    Since $E_{1+k}\cap E_{2+k}=E_k$, $L(S_2)\| L(G_2 \| (S_k/G_k))=L(S_2)\cap L(G_2 \| (S_k/G_k))$ because both components are over the same event set $E_{2+k}$, and the fact that $P_{1+k}^{2+k}=P_k^{2+k}$ imply that
    \begin{align*}
      P_{1+k}(K)  & = L(S_k/G_k)\, \|\, L(S_1/ [ G_1 \| (S_k/G_k)])\, \|\, P_{k}^{2+k} L(S_2/ [ G_2 \| (S_k/G_k)])\\
                  & = L(S_k/G_k)\, \|\, L(S_1/ [ G_1 \| (S_k/G_k)])\, \|\, P_{k}^{2+k} (L(S_2)\| L(G_2 \| (S_k/G_k)))\\
%                   & = L(S_k/G_k)\, \|\, L(S_1/ [ G_1 \| (S_k/G_k)])\, \|\, P_{k}^{2} L(S_2)\, \|\, P_{k}^{2+k} L(G_2 \| (S_k/G_k))\\
                  & \subseteq L(S_k/G_k)\, \|\, L(S_1/ [ G_1 \| (S_k/G_k)])\, \|\, P_{k}^{2+k} L(G_2 \| (S_k/G_k))\\
                  & \subseteq L(S_1/ [ G_1 \| (S_k/G_k)])\, \|\, P_{k}^{2+k} L(G_2 \| (S_k/G_k))\\
                  & \subseteq L(S_1/ [ G_1 \| (S_k/G_k)])\\
                  & \subseteq P_{1+k}(K)\,.
    \end{align*}

    Using again the fact that the closed-loop behavior under admissible supervisors can be recast as a synchronous composition of the plant and the supervisor, we finally get
    \begin{gather*}
        L(S_1) ~ \| ~ L(G_1) \| L(S_k/G_k) \| P_{k}^{2+k} L(G_2 \| (S_k/G_k)) %\\
      = P_{1+k}(K)\,.
    \end{gather*}

    In this equality, the whole term $G_1 \| (S_k/G_k) \| P_{k}^{2+k} (G_2 \| (S_k/G_k))$ after $L(S_1)$ can be taken as a new plant. According to the basic controllability theorem of supervisory control this equality implies that $P_{1+k}(K)$ is controllable with respect to $L(G_1 \| (S_k/G_k) \| P_{k}^{2+k} (G_2 \| (S_k/G_k)))$ and $E_{1+k,u}$, i.e., (ii.a) of the definition of conditional controllability is satisfied, which was to be shown.
  \end{proof}

  The interest in Theorem \ref{th:controlsynthesissafety} is in the computational savings in the computation of supervisors. The distributed way of constructing successively the supervisors $S_1$, $S_2$, $S_k$ is much less complex than the construction of the global supervisor for the system $G_1 \| G_2 \| G_k$.

  Note that it is required that $L(S_k/G_k)\subseteq P_k(K)$. Similarly, it is required that $L(S_i/ [G_i \| (S_k/G_k) ])\subseteq P_{i+k}(K)$, for $i=1,2$. Otherwise stated, we are looking for necessary conditions on global specifications for having the maximal permissivity of the language resulting by the application of the control scheme only in the (reasonable) case where safety can be achieved by the supervisors $S_k,S_1,$ and $S_2$. We have proven that in such a case conditional controllability is necessary for the optimality (maximal permitting). It is clear from the proof that for the sufficiency part we need not assume the inclusions above (cf.~\cite{KvS08}).

  In practice it is more interesting to know when safety (i.e., inclusion) holds when applying the overall control scheme combining a coordinator with local supervisors.

  Similarly as in the monolithic case it may happen that the maximal acceptable behavior given by the specification language $K$ is not achievable using the coordination control scheme. It follows from Theorem~\ref{th:controlsynthesissafety} that in our case such a situation occurs whenever $K$ is not conditionally controllable. A natural question is to find the best approximation from below: a conditionally controllable sublanguage. It turns out that the following result holds true.

  \begin{theorem} \label{existence}
    The supremal conditionally controllable sublanguage of a given specification $K$ always exists and is equal to the union of all conditionally controllable sublanguages of $K$.
  \end{theorem}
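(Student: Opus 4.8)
The standard strategy for proving that a supremal element of some class of sublanguages exists is to show that the class is closed under arbitrary unions; the supremum is then simply the union of all members, which is itself a member and therefore the largest one. So the plan is to take an arbitrary family $\{K_j\}_{j\in J}$ of conditionally controllable sublanguages of $K$, form $K' = \bigcup_{j\in J} K_j$, and verify that $K'$ is again conditionally controllable (note $K' \subseteq K$ is immediate, and since each $K_j$ is prefix-closed so is the union). Conditional controllability is the conjunction of the three controllability requirements (i), (ii.a), (ii.b) from Definition~\ref{def:conditionalcontrollability}, so it suffices to check each of the three separately.

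Each of the three conditions is a \emph{controllability} statement for one of the projected languages $P_k(K')$, $P_{1+k}(K')$, $P_{2+k}(K')$ against a fixed plant language. The obstacle is that controllability of the $K_j$ is stated for the projections $P_\bullet(K_j)$, whereas after taking the union I must control $P_\bullet(K')$, so the argument hinges on the fact that natural projection commutes with arbitrary union: $P_\bullet\bigl(\bigcup_j K_j\bigr) = \bigcup_j P_\bullet(K_j)$. Granting this, the first step is to recall (or reprove in one line) the classical fact that ordinary controllability is preserved under arbitrary unions: if each $L_j$ is controllable with respect to a fixed $L$ and $E_u$, then $\bigcup_j L_j$ is controllable with respect to the same $L$ and $E_u$. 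This holds because $\overline{\bigcup_j L_j} = \bigcup_j \overline{L_j}$ for prefix-closed languages, distributing the concatenation by $E_u$ and the intersection with $L$ over the union then gives $\overline{\bigcup_j L_j}\,E_u \cap L = \bigcup_j(\overline{L_j}E_u\cap L) \subseteq \bigcup_j \overline{L_j} = \overline{\bigcup_j L_j}$.

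For condition (i) the plant language $L(G_k)$ and $E_{k,u}$ are fixed, so applying the union-closure fact to $P_k(K') = \bigcup_j P_k(K_j)$ directly yields controllability of $P_k(K')$. The genuine subtlety lies in conditions (ii.a) and (ii.b), because there the reference plant language is \emph{not} fixed across the family: it is $L(G_1)\|P_k(K)\|P_k^{2+k}(L(G_2)\|P_k(K))$, which depends on $P_k(K)$, and hence potentially on the particular sublanguage. The hard part will therefore be to argue that the controllability in (ii.a) should be read against the plant built from the \emph{fixed} outer specification $K$ rather than from the varying $K_j$, so that once again the reference language is constant over the family and the union-closure lemma applies verbatim to $P_{1+k}(K') = \bigcup_j P_{1+k}(K_j)$. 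I expect this reading is exactly what Definition~\ref{def:conditionalcontrollability} intends, since the conditions are always phrased relative to the given $K$; if instead the plant were allowed to shrink with the sublanguage, one would need the extra observation that enlarging the reference plant only makes controllability harder, so controllability of each $P_{1+k}(K_j)$ against the larger fixed plant must be part of the definition anyway. Either way, once the three union-closures are in hand, $K'$ satisfies (i), (ii.a), (ii.b), so it is conditionally controllable, and being the union of the whole family it is the supremal conditionally controllable sublanguage of $K$.
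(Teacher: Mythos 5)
Your overall strategy---show the class of conditionally controllable sublanguages is closed under union, so the supremum is the union of all members---matches the paper, whose entire proof is the one-line remark that conditional controllability, like ordinary controllability, is preserved by language unions. Your handling of condition (i) is fine: there the plant $L(G_k)$ is fixed, projection commutes with union, and the classical union-closure of controllability applies verbatim. But your resolution of the crux in (ii.a)/(ii.b) is wrong. You propose to read the plant $L(G_1)\|P_k(\cdot)\|P_k^{2+k}(L(G_2)\|P_k(\cdot))$ as built from the \emph{fixed} outer specification $K$. That is not what Definition~\ref{def:conditionalcontrollability} means when instantiated at a candidate sublanguage: a sublanguage $K_j$ is conditionally controllable iff $P_{1+k}(K_j)$ is controllable with respect to the plant built from $P_k(K_j)$ \emph{itself}, which is exactly how the paper uses the notion elsewhere---in the proof of Theorem~\ref{thm2}, conditional controllability of $M$ is verified via conditions (I)--(III) with $P_k(M)$, not $P_k(K)$, appearing in the plants. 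Your fallback remark then runs the monotonicity backwards: enlarging the plant makes controllability \emph{harder}, so the hypothesis on each $K_j$ (controllability w.r.t.\ the smaller plant built from $P_k(K_j)$) is strictly weaker than, and does not imply, controllability w.r.t.\ any larger fixed plant; it cannot be ``part of the definition anyway.''

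Concretely, the step that fails: take $t\in P_{1+k}(K_j)$, $a\in E_{1+k,u}$, and $ta$ in the plant built from $P_k(K')$ where $K'=\bigcup_i K_i$. Since $P_k(K')\supseteq P_k(K_j)$, membership in the $K'$-plant does not give membership in the $K_j$-plant, so (ii.a) for $K_j$ cannot be invoked---and no fixed-plant union lemma closes this hole. The missing idea is a transfer step that exploits the interplay between the conditions: if $a\in E_1\setminus E_k$, then $P^{1+k}_k(ta)=P^{1+k}_k(t)\in P^{1+k}_kP_{1+k}(K_j)=P_k(K_j)$; if $a\in E_{k,u}$, then $P^{1+k}_k(t)\in P_k(K_j)$ and $P^{1+k}_k(ta)\in P_k(K')\subseteq L(G_k)$ (as $K'\subseteq K\subseteq L(G_1\|G_2\|G_k)$), so condition (i) \emph{for the same} $K_j$ yields $P^{1+k}_k(ta)\in P_k(K_j)$. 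In either case, using the decomposition $P^{2+k}_k(L(G_2)\|C)=P^{2+k}_k(P^{2+k}_2)^{-1}(L(G_2))\cap C$ (Lemma~\ref{lem10}, under $E_1\cap E_2\subseteq E_k$), every occurrence of $P_k(K')$ in the plant membership of $ta$ can be replaced by $P_k(K_j)$, and only then does (ii.a) for $K_j$ give $ta\in P_{1+k}(K_j)\subseteq P_{1+k}(K')$. Without this transfer argument your proof does not go through.
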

  \begin{proof}
    Similarly as for ordinary controllability it can be shown that conditional controllability is preserved by language unions.
  \end{proof}

  \begin{exmp}
    Consider the following generators over the event sets
    \[
      E_k=\{a,b,e,\varphi\}\subseteq E_1\cup E_2 = \{a,d,e,\varphi\} \cup \{b,f,\varphi\}\,,
    \]
    where the set of controllable events is $E_c=\{e,b,\varphi\}$. Define
    \begin{itemize}
      \item $G_1=(\{1,2,3,4\},\{a,d,e,\varphi\},f_1,1,\{1\})$ with the transition function $f_1$ defined in Figure~\ref{fig3a4}\subref{fig3},
      \item $G_2=(\{1,2,3\},\{b,\varphi,f\},f_2,1,\{1\})$ with the transition function $f_2$ defined in Figure~\ref{fig3a4}\subref{fig4}, and
      \item the coordinator $G_k=(\{1,2,3\},$ $\{a,b,\varphi\},f_k,1,\{1\})$ with $f_k$ defined in Figure~\ref{fig3a4}\subref{fig5}.
    \end{itemize}
    \begin{figure}[ht]
      \centering
      \subfloat[Generator for $G_1$.]{\label{fig3}\includegraphics{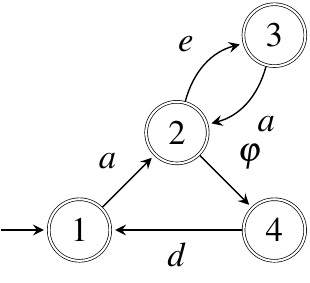}}
      \qquad\qquad
      \subfloat[Generator for $G_2$.]{\label{fig4}\includegraphics{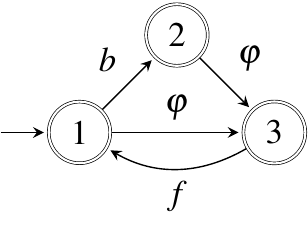}}
      \qquad\qquad
      \subfloat[Generator for $G_k$.]{\label{fig5}\includegraphics{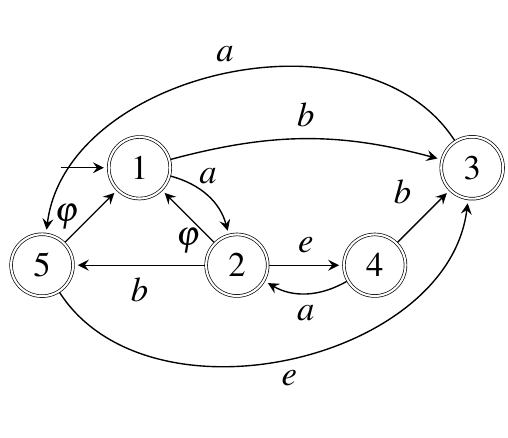}}
      \caption{Generators for $G_1$, $G_2$, and $G_k$.}
      \label{fig3a4}
    \end{figure}
    Assume the specification $K$ is described by the DES generator
    \[
      D=(\{1,2,3,4,5,6,7\},\{a,b,d,f,\varphi\},\delta,1,\{1\})\,,
    \]
    where $\delta$ is defined as in Figure~\ref{fig6}.
    \begin{figure}[ht]
      \begin{center}
        \includegraphics{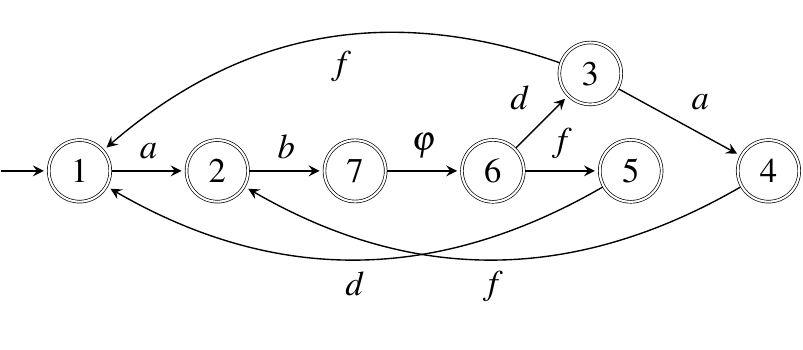}
        \caption{Generator for $D$.}
        \label{fig6}
      \end{center}
    \end{figure}

    It can be verified that $G_k$ makes $G_1$ and $G_2$ conditionally independent and that the specification language $K$ is conditionally decomposable. In addition, $P_k(K)$, $P_{1+k}(K)$, and $P_{2+k}(K)$ are controllable with respect to languages $L(G_k)$, $L(G_1)\| P_k(K) \| P^{2+k}_k (L(G_2) \| P_k(K))$, and $L(G_2)\| P_k(K) \| P^{1+k}_k (L(G_1) \| P_k(K))$, respectively. The automata representations of supervisors $S_1$, $S_2$, and $S_k$ coincide with generators $P_k(K)$, $P_{1+k}(K)$, and $P_{2+k}(K)$, respectively, see Figure~\ref{figPk}.
    Then, obviously,
    \begin{align*}
      L(S_1/[G_1 \| (S_k/G_k)]) ~ \| ~ L(S_2/[G_2 \| (S_k/G_k)]) ~ \| ~ L(S_k/G_k)) ~ = ~ K\,.
    \end{align*}
    \begin{figure}[ht]
      \centering
      \subfloat[Generator for $P_k(K)=S_k$.]{\label{fig1}\includegraphics[scale=.98]{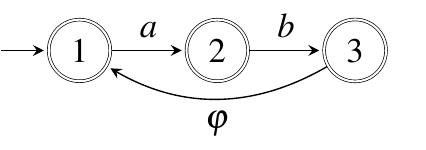}}
      \qquad\qquad
      \subfloat[Generator for $P_{1+k}(K)=S_1$.]{\label{fig2b}\includegraphics[scale=.98]{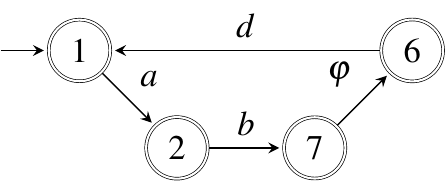}}
      \qquad\qquad
      \subfloat[Generator for $P_{2+k}(K)=S_2$.]{\label{fig2a}\includegraphics[scale=.98]{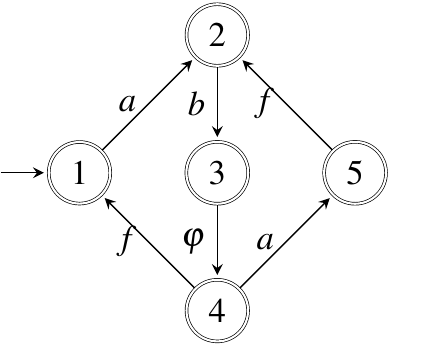}}
      \caption{Generators for supervisors $S_k$, $S_1$, and $S_2$.}
      \label{figPk}
    \end{figure}
  \end{exmp}

\section{Supremal conditionally controllable\\ sublanguages}\label{sec:procedure}
  In this section, we present a procedure for the computation of the supremal conditionally controllable sublanguage to a given specification language $K$. Assume generators $G_1$, $G_2$, and $G_k$ are given. In what follows, we use the notation $L_i=L(G_i)$, for $i=1,2,k$. Let $\supCC(K, L, (E_{1+k,u}, E_{2+k,u}, E_{k,u}))$ denote the supremal conditionally controllable sublanguage of $K$ with respect to $L=L(G_1\| G_2\| G_k)$ and uncontrollable event sets $(E_{1+k,u}, E_{2+k,u}, E_{k,u})$. This approach is based on concepts from hierarchical supervisory control, which is natural because the coordination control can be seen as a combination of decentralized and hierarchical supervisory control.

\subsection{Auxiliary results and definitions}
  First, additional results and definitions required in the rest of this paper are introduced. Several lemmas recall and deepen the knowledge concerning natural projections. Then, definitions of two important notions are recalled.

  \begin{lem}\label{lem9}
    Let $E=E_1\cup E_2$ and $E_k$ be event sets such that $E_1\cap E_2 \subseteq E_k$, and let $L_1\subseteq E_1^*$ and $L_2\subseteq E_2^*$ be two languages. Let $P_k:E^* \to E_k^*$ be a natural projection. Then,
    \[
      P_k(L_1\| L_2) = P^{1+k}_k(P^{1+k}_1)^{-1}(L_1)\cap P^{2+k}_k(P^{2+k}_2)^{-1}(L_2)\,.
    \]
  \end{lem}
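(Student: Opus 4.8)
The plan is to prove the two inclusions separately; the interesting content lies entirely in the $\supseteq$ direction, while $\subseteq$ is a routine push-forward. Throughout I write $P_i : E^* \to E_i^*$ for the projections appearing in the definition of the synchronous product, and I use repeatedly that projections compose, namely $P_1 = P^{1+k}_1 \circ P_{1+k}$, $P_k = P^{1+k}_k \circ P_{1+k}$, and symmetrically for the index $2$; these hold because $E_1,E_k \subseteq E_{1+k}\subseteq E$.

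For $\subseteq$, I would take $w \in P_k(L_1\|L_2)$, so $w = P_k(s)$ for some $s$ with $P_1(s)\in L_1$ and $P_2(s)\in L_2$. Setting $u = P_{1+k}(s)$ gives $P^{1+k}_1(u) = P_1(s)\in L_1$ and $P^{1+k}_k(u) = P_k(s) = w$, so $w \in P^{1+k}_k(P^{1+k}_1)^{-1}(L_1)$; the symmetric choice $v = P_{2+k}(s)$ places $w$ in the second set, hence in the intersection.

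The substance is the reverse inclusion, and the key preliminary observation, which is exactly where the hypothesis $E_1\cap E_2\subseteq E_k$ enters, is the alphabet identity
\[
  E_{1+k}\cap E_{2+k} = (E_1\cup E_k)\cap(E_2\cup E_k) = (E_1\cap E_2)\cup E_k = E_k .
\]
Now suppose $w$ lies in the intersection on the right. Then there are witnesses $u\in E_{1+k}^*$ with $P^{1+k}_1(u)\in L_1$ and $P^{1+k}_k(u)=w$, and $v\in E_{2+k}^*$ with $P^{2+k}_2(v)\in L_2$ and $P^{2+k}_k(v)=w$. Because the common alphabet of $E_{1+k}$ and $E_{2+k}$ is exactly $E_k$ and $u,v$ have the same $E_k$-image $w$, the synchronous product $\{u\}\|\{v\}$, formed over $E=E_{1+k}\cup E_{2+k}=E_1\cup E_2$, is nonempty. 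I would pick any $s$ in it, i.e.\ any $s$ with $P_{1+k}(s)=u$ and $P_{2+k}(s)=v$, and then verify $P_1(s)=P^{1+k}_1(u)\in L_1$, $P_2(s)=P^{2+k}_2(v)\in L_2$, and $P_k(s)=P^{1+k}_k(u)=w$; hence $s\in L_1\|L_2$ and $w=P_k(s)\in P_k(L_1\|L_2)$.

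The main obstacle is precisely the nonemptiness of $\{u\}\|\{v\}$: one must argue that two words over the overlapping alphabets $E_{1+k}$ and $E_{2+k}$ that agree after projection to their common alphabet can be interleaved into a single word over $E$. This is the standard fact that words with a common projection synchronize, and the alphabet computation above is exactly what guarantees the shared alphabet is $E_k$, so that the agreement $P^{1+k}_k(u)=P^{2+k}_k(v)$ is the only obstruction to gluing. Alternatively, one can sidestep the explicit merging by invoking Lemma~\ref{lemma:Wonham}: rewrite $P_k(L_1\|L_2)=P^1_{1\cap k}(L_1)\|P^2_{2\cap k}(L_2)$, expand this synchronous product over $E_k$ as an intersection of two inverse-projection languages, and identify each factor with $P^{i+k}_k(P^{i+k}_i)^{-1}(L_i)$ using $(P^{i+k}_i)^{-1}(L_i)=L_i\|E_k^*$. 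I expect the direct double-inclusion argument to be cleaner and to make the role of the hypothesis $E_1\cap E_2\subseteq E_k$ more transparent.
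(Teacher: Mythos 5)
Your proof is correct, but it takes a genuinely different route from the paper's. The paper dispatches the lemma in a three-line algebraic chain: Lemma~\ref{lemma:Wonham} rewrites $P_k(L_1\|L_2)$ as $P^1_{1\cap k}(L_1)\,\|\,P^2_{2\cap k}(L_2)$; the definition of synchronous product over $E_k$ expands this as $(P^k_{1\cap k})^{-1}P^1_{1\cap k}(L_1)\cap(P^k_{2\cap k})^{-1}P^2_{2\cap k}(L_2)$; and the commutativity identity $(P^k_{i\cap k})^{-1}P^i_{i\cap k}=P^{i+k}_k(P^{i+k}_i)^{-1}$, cited as Proposition~4.2(6) of \cite{FLT}, converts each factor into the stated form. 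You instead argue elementwise by double inclusion: the $\subseteq$ direction via the witness $u=P_{1+k}(s)$ and composition of projections (correctly identified as needing no hypothesis), and the $\supseteq$ direction by merging the two witnesses $u,v$ into a single word $s$ with $P_{1+k}(s)=u$ and $P_{2+k}(s)=v$, which is legitimate precisely because $E_{1+k}\cap E_{2+k}=E_k$ --- the exact point where $E_1\cap E_2\subseteq E_k$ enters --- and because $u,v$ share the $E_k$-projection $w$. The one step you leave as folklore, the nonemptiness of $\{u\}\|\{v\}$, is indeed standard and true: factor $u$ and $v$ along their common sequence of $E_k$-letters and interleave the intermediate blocks, which lie over the disjoint alphabets $E_1\setminus E_k$ and $E_2\setminus E_k$; spelling this out would make your argument fully self-contained, whereas the paper outsources its nontrivial content to \cite{Won04} and \cite{FLT}. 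Your closing alternative --- Lemma~\ref{lemma:Wonham} plus the identification $(P^{i+k}_i)^{-1}(L_i)=L_i\|E_k^*$ --- is in substance the paper's proof, and even sketches a derivation of the cited commutativity identity. In short, your direct argument buys transparency about where the hypothesis is used and independence from external references, while the paper's buys brevity.
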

  \begin{proof}
    This follows from Lemma~\ref{lemma:Wonham}, the definition of the synchronous product, and Proposition~4.2(6) in \cite{FLT} showing the commutativity
    \[
      (P^{k}_{i\cap k})^{-1} P^{i}_{i\cap k} = P^{i+k}_k(P^{i+k}_i)^{-1}\,,
    \]
    for $i=1,2$. Specifically, in turn we have
    \begin{align*}
      P_k(L_1\| L_2) & = P_{1\cap k}^1(L_1) \| P_{2\cap k}^2(L_2)\\
                     & = (P^{k}_{1\cap k})^{-1} P^{1}_{1\cap k} (L_1) \cap (P^{k}_{2\cap k})^{-1} P^{2}_{2\cap k} (L_2)\\
                     & = P^{1+k}_k(P^{1+k}_1)^{-1}(L_1)\cap P^{2+k}_k(P^{2+k}_2)^{-1}(L_2)\,,
    \end{align*}
    which proves the lemma.
  \end{proof}

  \begin{lem}\label{lem10}
    Let $E=E_1\cup E_2$ and $E_k$ be event sets such that $E_1\cap E_2 \subseteq E_k$, and let $L_1\subseteq E_1^*$, $L_2\subseteq E_2^*$, and $C_k\subseteq E_k^*$ be languages. Let $P_k^{i+k}:(E_i\cup E_k)^* \to E_k^*$ be a natural projection. Then,
    \[
      P^{i+k}_k(L_i \| C_k) = P^{i+k}_k(P^{i+k}_i)^{-1}(L_i) \cap C_k\,.
    \]
  \end{lem}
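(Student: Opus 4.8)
The plan is to reduce the claim to the distributivity of projection over a synchronous product (Lemma~\ref{lemma:Wonham}) followed by a single rewriting of the inverse-image/projection composition already exploited in Lemma~\ref{lem9}. The key observation is that the product $L_i \| C_k$ is formed over the alphabet $E_i \cup E_k = E_{i+k}$, with $L_i$ playing the role of a local language over $E_i$ and $C_k$ a local language over the full coordinator alphabet $E_k$, so that the target alphabet $E_k$ of the projection already contains one of the two component alphabets.

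First I would instantiate Lemma~\ref{lemma:Wonham} with these two local languages, taking the total event set to be $E_{i+k}$ and projecting onto $E_k$. The hypotheses are immediate: $E_k \subseteq E_{i+k}$, and the intersection of the two component alphabets is $E_i \cap E_k \subseteq E_k$. Lemma~\ref{lemma:Wonham} then yields
\[
  P^{i+k}_k(L_i \| C_k) = P^i_{i\cap k}(L_i) \| P^k_{k\cap k}(C_k)\,.
\]
Since $P^k_{k\cap k}$ is the projection from $E_k^*$ onto $(E_k \cap E_k)^* = E_k^*$, it is the identity, and the right-hand side collapses to $P^i_{i\cap k}(L_i) \| C_k$.

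Next I would unfold this remaining synchronous product via its definition. The factor $P^i_{i\cap k}(L_i)$ lives over $E_i \cap E_k$ while $C_k$ lives over $E_k$, so the product is formed over $E_k$, and because $C_k$ already occupies the full alphabet $E_k$ its inverse image under the identity projection is itself. Hence
\[
  P^i_{i\cap k}(L_i) \| C_k = (P^k_{i\cap k})^{-1} P^i_{i\cap k}(L_i) \cap C_k\,.
\]
Finally I would invoke the commutativity identity $(P^k_{i\cap k})^{-1} P^i_{i\cap k} = P^{i+k}_k (P^{i+k}_i)^{-1}$, the same one used in the proof of Lemma~\ref{lem9} (Proposition~4.2(6) in \cite{FLT}), to rewrite the first intersection factor as $P^{i+k}_k(P^{i+k}_i)^{-1}(L_i)$, which chains the two displays into exactly the asserted equality.

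Each step is routine, so I do not expect a genuine obstacle; the only place demanding care is the bookkeeping of the projection sub- and superscripts. In particular one must correctly identify $C_k$ as the component over the \emph{full} coordinator alphabet $E_k$, so that both the forward projection $P^k_{k\cap k}$ and the inverse image acting on it are trivial, and one must check that the instantiation of Lemma~\ref{lemma:Wonham} respects its hypothesis $E_i \cap E_k \subseteq E_k$. Getting those identifications right is what makes the displayed equalities compose cleanly.
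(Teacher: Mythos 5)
Your proposal is correct and is essentially the paper's argument in expanded form: the paper proves Lemma~\ref{lem10} with the one-line remark that it follows from Lemma~\ref{lem9} (instantiated with second component $C_k$ over $E_k$, so that the factor $P^k_k(P^k_k)^{-1}(C_k)$ collapses to $C_k$), and Lemma~\ref{lem9} is itself proven from Lemma~\ref{lemma:Wonham} together with the commutativity identity $(P^k_{i\cap k})^{-1}P^i_{i\cap k} = P^{i+k}_k(P^{i+k}_i)^{-1}$ of \cite{FLT} --- precisely the two ingredients you invoke. You have merely unrolled the citation into its constituent steps, with the alphabet bookkeeping handled correctly throughout.
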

  \begin{proof}
    This follows from Lemma~\ref{lem9}.
  \end{proof}

  \begin{lem}\label{lem1}
    Let $E'\subseteq E$ be two event sets. Let $M\subseteq {E'}^*$ be a language, and let $P:E^*\to {E'}^*$ be a natural projection. Then $M$ is prefix-closed if and only if $P^{-1}(M)$ is prefix-closed.
  \end{lem}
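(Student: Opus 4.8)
The plan is to reduce both implications to a single commutation identity relating prefix closure with inverse projection, namely
\[
  \overline{P^{-1}(M)} = P^{-1}(\overline{M})\,,
\]
and then to read off the equivalence from it together with the surjectivity of $P$.

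I would prove the identity by two inclusions. For $\overline{P^{-1}(M)}\subseteq P^{-1}(\overline{M})$, take $x\in\overline{P^{-1}(M)}$, so $x$ is a prefix of some $y$ with $P(y)\in M$; writing $y=xz$ and using that $P$ is a morphism gives $P(y)=P(x)P(z)$, so $P(x)$ is a prefix of $P(y)\in M$, whence $P(x)\in\overline{M}$ and $x\in P^{-1}(\overline{M})$. For the reverse inclusion $P^{-1}(\overline{M})\subseteq\overline{P^{-1}(M)}$, take $x\in P^{-1}(\overline{M})$, so $P(x)$ is a prefix of some $w\in M$, say $w=P(x)t$ with $t\in{E'}^*$. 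The crucial observation is that $t$ consists only of symbols that $P$ keeps, so $P(t)=t$ and hence $P(xt)=P(x)P(t)=P(x)t=w\in M$; thus $xt\in P^{-1}(M)$ and $x$ is a prefix of $xt$, giving $x\in\overline{P^{-1}(M)}$. This second inclusion, where one must reconstruct a genuine witness in $P^{-1}(M)$ by appending the unprojected tail $t$, is the one delicate point of the argument; the rest is formal manipulation with the fact that projections of prefixes are prefixes.

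Granting the identity, the forward direction is immediate: if $M=\overline{M}$, then $P^{-1}(M)=P^{-1}(\overline{M})=\overline{P^{-1}(M)}$, so $P^{-1}(M)$ is prefix-closed. For the converse I would use that $P:E^*\to{E'}^*$ is surjective, because $E'\subseteq E$ and $P$ fixes every symbol of $E'$, so $P(P^{-1}(A))=A$ for every $A\subseteq{E'}^*$. Then prefix-closedness of $P^{-1}(M)$ gives $P^{-1}(M)=\overline{P^{-1}(M)}=P^{-1}(\overline{M})$, and applying $P$ to both sides cancels the inverse image to yield $M=\overline{M}$. Hence $M$ is prefix-closed, which establishes the equivalence.
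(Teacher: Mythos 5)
Your proposal is correct, but it takes a genuinely different route from the paper. The paper proves both implications by a direct element chase: assuming $M$ prefix-closed, it takes $w\in P^{-1}(M)$ with prefix $x$, writes $w=xy$, and uses the morphism property to get $P(x)\in M$, hence $x\in P^{-1}(M)$; assuming $P^{-1}(M)$ prefix-closed, it uses that every $w\in M\subseteq {E'}^*$ satisfies $P(w)=w$, so $w$ and each of its prefixes $s$ lie in $P^{-1}(M)$, and then $P(s)=s\in M$. You instead isolate the commutation identity $\overline{P^{-1}(M)}=P^{-1}(\overline{M})$ and derive both implications algebraically from it, the converse via the cancellation $P(P^{-1}(A))=A$ furnished by surjectivity of $P$ (which indeed holds, since $P$ restricts to the identity on ${E'}^*$). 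The raw ingredients are the same two facts --- $P$ is a morphism, and $P$ fixes words over $E'$ --- but your packaging proves a strictly stronger, reusable statement: the identity holds for arbitrary $M$, with the lemma as an immediate corollary. The price is the one genuinely delicate step you correctly flag, reconstructing the witness $xt$ with $P(t)=t$ in the inclusion $P^{-1}(\overline{M})\subseteq\overline{P^{-1}(M)}$; the paper's shorter direct argument never needs this construction, since in its corresponding direction the preimage witness is simply $w$ itself.
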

  \begin{proof}
    Assume that $P^{-1}(M)$ is prefix-closed. Let $w\in M$, then $P(w)=w$ and, therefore, $w\in P^{-1}(M)$. For each prefix $s$ of $w$, $s\in P^{-1}(M)$. However, $P(s)=s\in M$. On the other hand, assume that $M$ is prefix-closed. Let $w\in P^{-1}(M)$ and $x$ be its prefix. Then $w=xy$, for some $y\in E^*$, and $P(w)=P(x)P(y)\in M$. Thus, $P(x)\in M$, which implies $x\in P^{-1}(M)$.
  \end{proof}

  The following lemma extending the definition of controllability is proven in \cite{brandt}.
  \begin{lem}\label{lem2}
    Let $K\subseteq L$ be two prefix-closed languages over an event set $E$. Then $K$ is controllable with respect to $L$ and $E_{u}$ if and only if \[
      KE_{u}^* \cap L \subseteq K\,.
    \]
  \end{lem}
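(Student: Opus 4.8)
The plan is to split the equivalence into its two implications and to observe first that, since $K$ is prefix-closed, $\overline{K}=K$, so the definition of controllability reduces to the single-letter condition $KE_u \cap L \subseteq K$. The statement to be proved then becomes the equivalence of $KE_u \cap L \subseteq K$ and $KE_u^* \cap L \subseteq K$.

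The direction from the star condition to the single-letter condition is immediate, since $E_u \subseteq E_u^*$ gives the chain $KE_u \cap L \subseteq KE_u^* \cap L \subseteq K$. No further work is needed here.

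The reverse direction is the substantive part, and I would establish it by induction on the length of the uncontrollable suffix. Let $w \in KE_u^* \cap L$ and write $w = su$ with $s \in K$ and $u \in E_u^*$; I induct on $|u|$. The base case $u=\varepsilon$ gives $w=s\in K$. For the inductive step I write $u = u'a$ with $a \in E_u$ and set $w' = su'$. Since $L$ is prefix-closed and $w = w'a \in L$, the prefix $w'$ also lies in $L$; as $w' \in KE_u^*$ as well, the induction hypothesis yields $w' \in K$. Then $w = w'a$ with $w' \in K$, $a \in E_u$, and $w \in L$, so $w \in KE_u \cap L \subseteq K$ by the single-letter condition, closing the induction.

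The only place requiring care is the inductive step, where I rely on the prefix-closedness of $L$ to guarantee that the intermediate word $w' = su'$ still belongs to $L$; without this hypothesis the successive single-letter applications could leave $L$ and the argument would break. I do not anticipate any further obstacle, as the remainder is a routine unwinding of the definitions.
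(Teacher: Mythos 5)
Your proof is correct. Note that the paper itself gives no proof of this lemma---it is stated with a citation to \cite{brandt}---so there is no internal argument to compare against; your proof, reducing controllability to the single-letter condition $KE_u\cap L\subseteq K$ via prefix-closedness of $K$ and then lifting to $KE_u^*\cap L\subseteq K$ by induction on the length of the uncontrollable suffix, is the standard argument and is complete. You also correctly isolated the one point where a hypothesis does real work: prefix-closedness of $L$ is what guarantees the intermediate word $su'$ stays in $L$ at each inductive step, and it is indeed among the lemma's assumptions.
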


  \begin{lem}\label{lem11}
    Let $E=E_1\cup E_2$ be event sets, and let $L_1\subseteq E_1^*$ and $L_2\subseteq E_2^*$ be two languages. Let $P_i : E^* \to E_i^*$ be natural projections, for $i=1,2$. Let $A\subseteq E^*$ be a language such that $P_1(A)\subseteq L_1$ and $P_2(A)\subseteq L_2$. Then
    \[
      A\subseteq L_1\|L_2\,.
    \]
  \end{lem}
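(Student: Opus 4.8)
The plan is to unfold the definition of the synchronous product and reduce the claim to the elementary containment of a language in the inverse image of its own projection, exactly the observation already used in the proof of Lemma~\ref{simple}. Recall that by definition $L_1 \| L_2 = P_1^{-1}(L_1) \cap P_2^{-1}(L_2)$, so it suffices to establish separately the two inclusions $A \subseteq P_1^{-1}(L_1)$ and $A \subseteq P_2^{-1}(L_2)$; the conclusion then follows immediately by intersecting them.

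First I would fix $i \in \{1,2\}$ and recall the standard fact that every language is contained in the inverse image of its own projection, namely $A \subseteq P_i^{-1}(P_i(A))$: indeed, for any $w \in A$ we have $P_i(w) \in P_i(A)$, so $w$ lies in the set of words whose $P_i$-image falls in $P_i(A)$. Next I would invoke the hypothesis $P_i(A) \subseteq L_i$ together with the monotonicity of the inverse image $P_i^{-1}$ to get $P_i^{-1}(P_i(A)) \subseteq P_i^{-1}(L_i)$. Chaining these two inclusions yields $A \subseteq P_i^{-1}(L_i)$ for both $i=1$ and $i=2$, and taking the intersection over $i$ gives $A \subseteq P_1^{-1}(L_1) \cap P_2^{-1}(L_2) = L_1 \| L_2$, as required.

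There is no genuine obstacle here: the statement is an immediate consequence of the definition of the synchronous product, and the only mildly non-routine ingredient is the containment $A \subseteq P_i^{-1}P_i(A)$, which is itself a one-line verification. The result is best understood as a convenient componentwise sufficient condition for membership in a synchronous product, complementing the defining identity $L_1 \| L_2 = P_1^{-1}(L_1) \cap P_2^{-1}(L_2)$.
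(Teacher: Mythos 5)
Your proof is correct and follows essentially the same route as the paper's: both rest on the containment $A\subseteq P_i^{-1}P_i(A)$ and the definition $L_1\| L_2 = P_1^{-1}(L_1)\cap P_2^{-1}(L_2)$, differing only in whether the hypothesis $P_i(A)\subseteq L_i$ is applied via monotonicity of $P_i^{-1}$ before intersecting (your version) or via monotonicity of the synchronous product after rewriting the intersection as $P_1(A)\| P_2(A)$ (the paper's version).
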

  \begin{proof}
    As $A\subseteq P_i^{-1}P_i(A)$, for $i=1,2$, it follows that
    \begin{align*}
      A & \subseteq P_1^{-1}P_1(A) \cap P_2^{-1}P_2(A)\\
        & = P_1(A) \| P_2(A), \hspace{1.6cm}\text{by definition},\\
        & \subseteq L_1 \| L_2\,.
    \end{align*}
    Hence, the lemma holds true.
  \end{proof}

  \begin{lem}[Transitivity of controllability]\label{lem_trans}
    Let $K\subseteq L \subseteq M$ be languages over an event set $E$ such that $K$ is controllable with respect to $L$ and $E_u$, and $L$ is controllable with respect to $M$ and $E_u$. Then $K$ is controllable with respect to $M$ and $E_u$.
  \end{lem}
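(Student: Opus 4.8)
The plan is to prove the inclusion $\overline{K}\,E_u \cap M \subseteq \overline{K}$ directly by an element-chasing argument, using the two controllability hypotheses in sequence. First I would record the consequences of prefix-closedness: since controllability of $K$ with respect to $L$ is assumed, the reference language $L$ is prefix-closed, so $L=\overline{L}$; likewise $M=\overline{M}$. Moreover, from $K\subseteq L$ together with the prefix-closedness of $L$ I obtain $\overline{K}\subseteq\overline{L}=L$, which is the key bridge between the two hypotheses.

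Next I would take an arbitrary word of the form $sa$ with $s\in\overline{K}$ and $a\in E_u$ such that $sa\in M$, and aim to show $sa\in\overline{K}$. Because $s\in\overline{K}\subseteq L=\overline{L}$, the word $sa$ lies in $\overline{L}\,E_u$, and by assumption it lies in $M$; hence $sa\in\overline{L}\,E_u\cap M$. Applying controllability of $L$ with respect to $M$ and $E_u$ gives $sa\in\overline{L}=L$.

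Now $s\in\overline{K}$, $a\in E_u$, and $sa\in L$, so $sa\in\overline{K}\,E_u\cap L$. Applying controllability of $K$ with respect to $L$ and $E_u$ yields $sa\in\overline{K}$, which is exactly what was to be shown. Since $sa$ was arbitrary, this establishes $\overline{K}\,E_u\cap M\subseteq\overline{K}$, that is, $K$ is controllable with respect to $M$ and $E_u$.

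I expect no genuine obstacle here: the argument is a routine two-step chase. The only point requiring attention is the order in which the hypotheses are applied and the observation $\overline{K}\subseteq L$, which is what permits a prefix lying in $\overline{K}$ to be treated as a prefix in $\overline{L}$, so that the outer controllability relation (of $L$ inside $M$) can be invoked first and the result handed to the inner relation (of $K$ inside $L$).
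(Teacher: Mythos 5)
Your proof is correct and follows essentially the same route as the paper: a two-step element chase in which controllability of $L$ with respect to $M$ is applied first to conclude $sa\in L$, and controllability of $K$ with respect to $L$ then gives $sa\in \overline{K}$. If anything, you are slightly more careful than the paper's own proof, which drops the closure bars throughout (harmless in its context, since all languages under consideration there are prefix-closed).
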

  \begin{proof}
    From the assumptions we know that
    \begin{align*}
      KE_u \cap L \subseteq K\quad \text{ and }\quad LE_u \cap M \subseteq L
    \end{align*}
    and we want to show that $KE_u \cap M \subseteq K$.

    Assume that $s\in K$, $a\in E_u$, and $sa\in M$. Then, $K\subseteq L$ implies that $s\in L$. As $sa\in M$, it follows from controllability of $L$ with respect to $M$ that $sa\in L$. However, $sa\in L$ implies that $sa\in K$, by controllability of $K$ with respect to $L$. Hence, the proof is complete.
  \end{proof}

  The following concepts \cite{WW96,FLT} are required in the main result of this section. These concepts are stemming from hierarchical supervisory control \cite{WW96}. It should not be surprising that they play a role in our study, because coordination control can be seen as a particular instance of hierarchical control.

  \begin{definition}\label{def5}
    The natural projection $P:E^* \to E_k^*$, where $E_k\subseteq E$ are event sets, is an {\em $L$-observer} for $L\subseteq E^*$ if, for all $t\in P(L)$ and $s\in \overline{L}$, if $P(s)$ is a prefix of $t$, then there exists $u\in E^*$ such that $su\in L$ and $P(su)=t$.
  \end{definition}

  \begin{definition}
    The natural projection $P:E^*\to E_k^*$, where $E_k\subseteq E$ are event sets, is {\em output control consistent} (OCC) for $L\subseteq E^*$ if for every $s\in \overline{L}$ of the form
    \[
      s=\sigma_1\sigma_2\dots\sigma_\ell \quad \textrm{ or }\quad  s = s'\sigma_0 \sigma_1 \dots \sigma_\ell, \,\quad \ell\ge 1\,,
    \]
    where $\sigma_0, \sigma_\ell\in E_k$ and $\sigma_i \in E\setminus E_k$, for $i=1,2,\dots,\ell-1$, if $\sigma_\ell \in E_{u}$, then $\sigma_i \in E_{u}$, for all $i=1,2,\dots,\ell-1$.
  \end{definition}

\subsection{Computation of supremal conditionally controllable sublanguages}
  Now, we can present the main result of this section, which gives a procedure for the computation of supremal conditionally controllable sublanguages.

  \begin{theorem}\label{thm2}
    Let $K$ and $L=L_1\|L_2\|L_k$ be two prefix-closed languages over an event set $E=E_{1}\cup E_{2}\cup E_k$, where $L_i\subseteq E_i^*$, for $i=1,2,k$, and let the specification language $K$ be conditionally decomposable. Define the languages
    \begin{align*}
      \supC_k     & = \supC(P_k(K) \| P_k(L_1\| L_2) \| L_k, L_k, E_{k,u})\,,\\
      \supC_{1+k} & = \supC(P_{1+k}(K) \| L_1, L_1 \| \supC_k, E_{1+k,u})\,,\\
      \supC_{2+k} & = \supC(P_{2+k}(K) \| L_2, L_2 \| \supC_k, E_{2+k,u})\,.
    \end{align*}
    Let the projection $P^{i+k}_k$ be an $(P^{i+k}_i)^{-1}(L_i)$-observer and OCC for the language $(P^{i+k}_i)^{-1}(L_i)$, for $i=1,2$. Then,
    \begin{align*}
      \supC_k \| \supC_{1+k} \| \supC_{2+k} = \supCC(K\cap L, L, (E_{1+k,u}, E_{2+k,u}, E_{k,u}))\,.
    \end{align*}
  \end{theorem}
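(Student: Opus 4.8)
The plan is to establish the asserted identity by proving two inclusions. Write $M = \supC_k \| \supC_{1+k} \| \supC_{2+k}$ and $N = \supCC(K\cap L, L, (E_{1+k,u},E_{2+k,u},E_{k,u}))$. Throughout I would use the standing hypothesis $E_1\cap E_2\subseteq E_k$ (so that $E_{1+k}\cap E_{2+k}=E_k$) together with the projection identities of Lemmas \ref{lemma:Wonham}, \ref{lem9}, and \ref{lem10}, and the transitivity of controllability (Lemma \ref{lem_trans}).

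For the inclusion $M\subseteq N$, I would first check $M\subseteq K\cap L$. Each factor satisfies $\supC_{i+k}\subseteq P_{i+k}(K)\|L_i\subseteq P_{i+k}(K)$ and $\supC_k\subseteq P_k(K)$, so conditional decomposability of $K$ gives $M\subseteq P_{1+k}(K)\|P_{2+k}(K)\|P_k(K)=K$; likewise $\supC_{i+k}\subseteq L_i$ and $\supC_k\subseteq L_k$ give $M\subseteq L_1\|L_2\|L_k=L$. Next I would verify that $M$ is conditionally controllable, which amounts to computing the projections $P_k(M)$ and $P_{i+k}(M)$ and matching them against Definition \ref{def:conditionalcontrollability}. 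Here the observer and OCC hypotheses enter decisively: they are exactly what is needed, via the hierarchical results of \cite{WW96,FLT}, to upgrade the trivial inclusion $P_k^{i+k}(\supC_{i+k})\subseteq\supC_k$ to the equality $P_k^{i+k}(\supC_{i+k})=\supC_k$. Granting this, Lemmas \ref{lemma:Wonham} and \ref{lem9} yield $P_k(M)=\supC_k$ and $P_{i+k}(M)=\supC_{i+k}$, so condition (i) holds because $\supC_k$ is controllable with respect to $L_k$ by construction. For (ii.a) I would observe that, since $\supC_k\subseteq P_k(L_1\|L_2)\subseteq P_k^{2+k}(P_2^{2+k})^{-1}(L_2)$, Lemma \ref{lem10} forces the cross term to collapse, $P_k^{2+k}(L_2\|\supC_k)=\supC_k$; hence the reference language in (ii.a) reduces to $L_1\|\supC_k$, against which $\supC_{1+k}$ is controllable by construction, and (ii.b) is symmetric. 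Supremality of $N$ then gives $M\subseteq N$.

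For the reverse inclusion $N\subseteq M$, I would not need conditional decomposability of $N$: the inclusion $N\subseteq P_{1+k}(N)\|P_{2+k}(N)\|P_k(N)$ holds for every language, so it suffices to prove the three projection inclusions $P_k(N)\subseteq\supC_k$ and $P_{i+k}(N)\subseteq\supC_{i+k}$ and then take the synchronous product. Since $N$ is conditionally controllable, $P_k(N)$ is controllable with respect to $L_k$; combining $N\subseteq K\cap L$ with Lemma \ref{lemma:Wonham} shows $P_k(N)\subseteq P_k(K)\|P_k(L_1\|L_2)\|L_k$, and supremality of $\supC_k$ gives $P_k(N)\subseteq\supC_k$. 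The delicate point is $P_{1+k}(N)\subseteq\supC_{1+k}$: containment in the defining language $P_{1+k}(K)\|L_1$ is immediate from $N\subseteq K\cap L$, but I must promote the controllability of $P_{1+k}(N)$ furnished by (ii.a), namely controllability with respect to $L_1\|P_k(N)\|P_k^{2+k}(L_2\|P_k(N))$, to controllability with respect to the \emph{larger} reference $L_1\|\supC_k$. Because this is a passage to a larger plant, I would route it through Lemma \ref{lem_trans}: first note that $P_k(N)$ is controllable with respect to the smaller reference $\supC_k$ (using $P_k(N)\subseteq\supC_k\subseteq L_k$), then lift this along the synchronous product with $L_1$ to conclude that $L_1\|P_k(N)$ is controllable with respect to $L_1\|\supC_k$, and finally chain with the controllability supplied by (ii.a) via transitivity. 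Supremality of $\supC_{1+k}$ then yields $P_{1+k}(N)\subseteq\supC_{1+k}$, and symmetrically for $2+k$.

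The main obstacle is the interplay between projection and the supremal controllable sublanguage operator. Everything hinges on two facts: that the observer and OCC conditions make $P_k^{i+k}$ map $\supC_{i+k}$ \emph{onto} $\supC_k$ rather than merely into it, and that controllability survives being lifted across a synchronous product with $L_i$. These are precisely the hierarchical-control phenomena isolated in \cite{WW96,FLT}, and the whole decomposition $\supC_k\|\supC_{1+k}\|\supC_{2+k}$ is engineered, in particular by inserting the term $P_k(L_1\|L_2)$ into the definition of $\supC_k$, so that the cross terms appearing in conditional controllability collapse and the local supremal computations glue into the global one. I expect the onto-projection step and the controllability-lifting step to carry essentially all of the technical weight, while the remaining manipulations are routine applications of Lemmas \ref{lemma:Wonham}, \ref{lem9}, \ref{lem10}, and \ref{simple}.
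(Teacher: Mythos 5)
Your treatment of the easy parts matches the paper: the inclusion $M\subseteq K\cap L$, the collapse of the cross term $P_k^{2+k}(L_2\|\supC_k)=\supC_k$ via Lemma~\ref{lem10} together with $\supC_k\subseteq P_k^{2+k}(P_2^{2+k})^{-1}(L_2)$, and the whole reverse inclusion $\supCC\subseteq M$ (Lemma~\ref{lem11}, reduction of the reference language in (ii.a) to $L_1\|P_k(\supCC)$, lifting of controllability across the product with $L_1$ via Proposition~4.6 of \cite{FLT}, then transitivity, Lemma~\ref{lem_trans}) is the paper's argument step for step. The genuine gap is in the forward inclusion, at precisely the point you say carries all the technical weight: the claim that observer plus OCC upgrade $P_k^{i+k}(\supC_{i+k})\subseteq\supC_k$ to equality, whence $P_k(M)=\supC_k$ and $P_{i+k}(M)=\supC_{i+k}$. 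No such commutation is available under the stated hypotheses, and the claim is false. Take $E_1=\{a,c\}$, $E_2=\{b\}$, $E_k=\{c\}$, all events controllable (so OCC is vacuous), $L_1=\overline{\{ac\}}$, $L_2=\overline{\{b\}}$, $L_k=\overline{\{c\}}$, and $K=\overline{\{bc,cb\}}$. Then $K$ is conditionally decomposable, $P_k^{1+k}$ is an $L_1$-observer and $P_k^{2+k}$ an observer for $(P_2^{2+k})^{-1}(L_2)=\overline{c^*bc^*}$; yet $\supC_k=\{\eps,c\}$, while $P_{1+k}(K)\|L_1=\{\eps,c\}\cap\{\eps,a,ac\}=\{\eps\}$, so $\supC_{1+k}=\{\eps\}$ and $P_k^{1+k}(\supC_{1+k})=\{\eps\}\subsetneq\supC_k$. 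Consequently $P_k(M)=\{\eps\}\neq\supC_k$ and $P_{2+k}(M)=\{\eps,b\}\neq\supC_{2+k}=\overline{\{bc,cb\}}$, while the theorem itself is untouched: here $K\cap L=\{\eps,b\}=M=\supCC$, since with $E_u=\emptyset$ every language is conditionally controllable. The root cause is that the specification $P_k(K)\|P_k(L_1\|L_2)\|L_k$ defining $\supC_k$ constrains only projections and cannot see that $P_{1+k}(K)$ forbids the unique $L_1$-path to a coordinator event; the hierarchical results of \cite{WW96,FLT} you invoke would in any case require observer/OCC relative to the actual plant $L_1\|\supC_k$ of the computation of $\supC_{1+k}$, and would relate its projection to a supremal controllable sublanguage of $P_k^{1+k}(P_{1+k}(K)\|L_1)$, not to this $\supC_k$.

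The paper never claims the three projections of $M$ equal the three building blocks. It writes $P_k(M)=\supC_k\cap P_k^{1+k}(\supC_{1+k})\cap P_k^{2+k}(\supC_{2+k})$ and proves closure under uncontrollable continuations pointwise: given $x\in P_k(M)$ with a preimage $w\in M$ and $a\in E_{k,u}$ with $xa\in L_k$, controllability of $\supC_k$ gives $xa\in\supC_k$; the observer property of $P_k^{i+k}$ for $(P_i^{i+k})^{-1}(L_i)$ then supplies $u\in(E_i\setminus E_k)^*$ with $P_{i+k}(w)ua\in(P_i^{i+k})^{-1}(L_i)$; OCC forces $u\in E_u^*$; Lemma~\ref{lem2} (controllability extended to $E_u^*$) gives $P_{i+k}(w)u\in\supC_{i+k}$, and one further controllability step gives $P_{i+k}(w)ua\in\supC_{i+k}$, i.e., $xa\in P_k^{i+k}(\supC_{i+k})$. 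Condition (ii) is handled the same way after a case split on $a\in E_1\setminus E_k$ versus $a\in E_1\cap E_k$. So the observer and OCC hypotheses are consumed in this pull-back argument, not in an onto-projection lemma; to repair your proof you must replace the claimed equalities $P_k(M)=\supC_k$ and $P_{i+k}(M)=\supC_{i+k}$ by an argument of this kind.
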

  \begin{proof}
    First, let us define
    \begin{align*}
           M & :=\supC_k \| \supC_{1+k} \| \supC_{2+k}\\
             & \text{ and }\\
      \supcc & := \supcc(K\cap L, L, (E_{1+k,u}, E_{2+k,u}, E_{k,u}))\,.
    \end{align*}
    To prove the first inclusion, $M\subseteq \supcc$, we show that
    \begin{enumerate}
      \item $M \subseteq K\cap L$ and
      \item $M$ is conditionally controllable with respect to the language $L$ and uncontrollable event sets $(E_{1+k,u}, E_{2+k,u}, E_{k,u})$.
    \end{enumerate}

    \smallskip\noindent
    1) First, notice that
      \begin{align*}
        M = \supC_k \| \supC_{1+k} \| \supC_{2+k}
          &\subseteq P_k(K)\| L_k \| P_{1+k}(K)\| L_1 \| P_{2+k}(K)\| L_2\\
          &= \underbrace{P_k(K) \| P_{1+k}(K)\| P_{2+k}(K)}_{K} \| \underbrace{L_k \| L_1 \| L_2}_{L}\\
          &= K \cap L
      \end{align*}
      since $K$ is conditionally decomposable and $L=L_1\|L_2\|L_k$.

    \smallskip\noindent
    2) To prove that $M$ is conditionally controllable with respect to the language $L$ and $(E_{1+k,u}, E_{2+k,u}, E_{k,u})$, we need to show the following three properties of Definition~\ref{def:conditionalcontrollability}:
    \begin{itemize}
      \item[(I)]   $P_k(M) E_{k,u} \cap L_k \subseteq P_k(M)$,
      \item[(II)]  $P_{1+k}(M)E_{1+k,u} \cap L_1 \| P_k(M) \| P^{2+k}_k(L_2 \| P_k(M))%\\
                    \subseteq P_{1+k}(M)$,
      \item[(III)] $P_{2+k}(M)E_{2+k,u} \cap L_2 \| P_k(M) \| P^{1+k}_k(L_1 \| P_k(M))%\\
                    \subseteq P_{2+k}(M)$.
    \end{itemize}
    As the last two properties are similar, we prove only (II).

    \smallskip\noindent
    (I) To prove that $P_k(M) E_{k,u} \cap L_k \subseteq P_k(M)$ note that
      \begin{align*}
        P_k(M) & = \supC_k
          \cap P^{1+k}_k(\supC_{1+k})
          \cap P^{2+k}_k(\supC_{2+k})\,,
      \end{align*}
      which follows from Lemma~\ref{lemma:Wonham} by replacing the synchronous product with the intersection (which can be done because the components are over the same event set).

      Let $x\in P_k(M)$, then there exists $w\in M$ such that $P_k(w)=x$. Assume that $a\in E_{k,u}$ is such that $xa\in L_k$. We need to show that
      \[
        xa \in P_k(M)\,.
      \]
      As $x\in P_k(M)\subseteq \supC_k$, it follows from controllability of $\supc_k$ with respect to $L_k$ and $E_{k,u}$ that
      \begin{align}\label{lab00}
        xa \in \supC_k\,.
      \end{align}
      Thus, it remains to show that
      \begin{align}\label{lab_star}
        xa\in P^{i+k}_k(\supc_{i+k})\,,
      \end{align}
      for $i=1,2$. To this end, note first that from the properties of natural projections we have that
      \begin{align}\label{lab0}
        P_{1+k}(w)\in P_{1+k}(M)\subseteq \supC_{1+k}\,,
      \end{align}
      and $a\in E_{k,u}\subseteq E_{1+k,u}$. Next, by the definition of the synchronous product we obtain that
      \begin{align}\label{lab1}
        L_1 \| \supC_k = (P^{1+k}_1)^{-1}(L_1)\cap (P^{1+k}_k)^{-1}(\supC_k)\,.
      \end{align}
      Furthermore, $P_k^{1+k}(P_{1+k}(w)a) = xa \in \supC_k$, which implies that $P_{1+k}(w)a\in (P^{1+k}_k)^{-1}(\supC_k)$. This and the fact that
      \begin{align}\label{lab2}
        \supC_k \subseteq P_k(K) & \| P_k(L_1\| L_2)\nonumber\\
                       =  P_k(K) & \cap P_k(L_1\| L_2)\nonumber\\
                       =  P_k(K) & \cap P^{1+k}_k(P^{1+k}_1)^{-1}(L_1)\\
                                 & \cap P^{2+k}_k(P^{2+k}_2)^{-1}(L_2)\,, ~~~\text{by Lemma~\ref{lem9}}\,,\nonumber
      \end{align}
      implies that
      \begin{align}\label{lab3}
        P_k^{1+k}(P_{1+k}(w)a) & \in P^{1+k}_k(P^{1+k}_1)^{-1}(L_1)\,.
      \end{align}
      In addition, it follows from $(\ref{lab0})$ and the definition of $\supc_{1+k}$ that
      \begin{align}\label{lab4}
        P_{1+k}(w)\in (P^{1+k}_1)^{-1}(L_1)\,.
      \end{align}
      As $P_k^{1+k}(P_{1+k}(w))$ is obviously a prefix of $P_k^{1+k}(P_{1+k}(w)a)$, and $P^{1+k}_k$ is an $(P^{1+k}_1)^{-1}(L_1)$-observer, we obtain that there exists $u\in E_{1+k}^*$ such that
      \begin{align}\label{lab4_5}
        P_{1+k}(w)ua\in (P^{1+k}_1)^{-1}(L_1)
      \end{align}
      and $P^{1+k}_k(P_{1+k}(w)ua) = P^{1+k}_k(P_{1+k}(w)a)$, which means that $u \in (E_1\setminus E_k)^*$. Since the language $L_1$ is prefix-closed, so is by Lemma~\ref{lem1} $(P^{1+k}_1)^{-1}(L_1)$. Therefore, $P_{1+k}(w)u\in (P^{1+k}_k)^{-1}(L_1)$. Note that $P^{1+k}_k(P_{1+k}(w)u) = x \in \supc_k$, i.e., $P_{1+k}(w)u \in (P^{1+k}_k)^{-1}(\supc_k)$. By $(\ref{lab1})$ we thus obtain that
      \begin{align}\label{lab5}
        P_{1+k}(w)u \in L_1 \| \supC_k\,.
      \end{align}
      As the natural projection $P^{1+k}_k$ is also OCC for $(P^{1+k}_1)^{-1}(L_1)$ and $P_{1+k}(w)ua$ satisfies that $a\in E_k$, $u\in (E_1\setminus E_k)^*$, and $a\in E_{u}$, it follows that
      \[
        u\in E_{u}^*\,.
      \]
      As $P_{1+k}(w)\in \supC_{1+k}$, $\supc_{1+k}$ is controllable with respect to $L_1 \| \supC_k$ and $E_{1+k,u}$, and $P_{1+k}(w)u \in L_1 \| \supC_k$, Lemma~\ref{lem2} (extended controllability) implies that
      \begin{align}\label{lab6}
        P_{1+k}(w)u \in \supC_{1+k}\,.
      \end{align}
      Recall that $P_{1+k}(w)ua \in (P^{1+k}_1)^{-1}(L_1)$ is satisfied by $(\ref{lab4_5})$.

      As we also have $P^{1+k}_k(P_{1+k}(w)ua) = xa \in \supc_k$, by $(\ref{lab00})$, we obtain by $(\ref{lab1})$ that $P_{1+k}(w)ua \in L_1 \| \supC_k$, which implies by controllability of $\supc_{1+k}$ with respect to the language $L_1\|\supc_k$ and $E_{1+k,u}$ that $P_{1+k}(w)ua \in \supC_{1+k}$, i.e.,
      \begin{align*}
        xa & = P^{1+k}_k(P_{1+k}(w)ua)
             \in P^{1+k}_k(\supC_{1+k})\,.
      \end{align*}
      Analogously, we can prove that $xa\in P^{2+k}_k(\supC_{2+k})$, which proves $(\ref{lab_star})$. Thus,
      \[
        xa\in P_k(M)\,,
      \]
      which was to be shown.

    \smallskip\noindent
    (II) Now, we show the other property, namely
      \begin{align*}
        P_{1+k}(M)E_{1+k,u} \cap L_1 \| P_k(M) \| P^{2+k}_k(L_2 \| P_k(M)) \subseteq P_{1+k}(M)\,.
      \end{align*}
      First, note that by Lemma~\ref{lemF} and the definition of synchronous product we obtain that
      \begin{align*}
        P_{1+k}(M) = (P^{1+k}_k)^{-1}(\supC_k)\,\cap\, \supC_{1+k} \cap (P^{1+k}_k)^{-1}P^{2+k}_k(\supC_{2+k})\,.
      \end{align*}
      Assume that $x\in P_{1+k}(M)$. This is if and only if there exists $w\in M$ such that $P_{1+k}(w)=x$. Then $x\in \supC_{1+k}$. Let there exist $a\in E_{1+k,u}$ such that
      \begin{align}\label{cross}
        xa \in L_1 \| P_k(M) \| P^{2+k}_k(L_2 \| P_k(M))\,.
      \end{align}
      We need to show that
      \begin{align}\label{lab8}
        xa \in P_{1+k}(M)\,.
      \end{align}
      As $P_k(M)\subseteq \supc_k$, it follows that
      \begin{align} \label{lab9}
                   L_1 \| P_k(M) \| P^{2+k}_k(L_2 \| P_k(M)) \subseteq  L_1 \| \supC_k \| P^{2+k}_k(L_2 \| \supC_k)\,.
      \end{align}
      From controllability of $\supC_{1+k}$ with respect to $L_1 \| \supC_k$ and $E_{1+k,u}$, and because of the following inclusion $L_1 \| \supC_k \| P^{2+k}_k(L_2 \| \supC_k) \subseteq L_1 \| \supC_k$, we obtain that
      \begin{align}\label{lab10}
        xa\in \supC_{1+k}\,.
      \end{align}
      However, we also know that
      \[
        P_k(w)\in P_k(M)\subseteq \supC_k ~~ \mbox{(see above)}
      \]
      and
      \[
        P_{2+k}(w)\in P_{2+k}(M)\subseteq \supC_{2+k}\,.
      \]
      (A) On one hand, if $a\in E_1\setminus E_k$, then because $P^{1+k}_k(xa)=P_k(wa)=P_k(w)$, we obtain that $P^{1+k}_k(xa)\in \supC_k$, and because $P^{1+k}_k(xa)=P^{2+k}_kP_{2+k}(wa)=P^{2+k}_kP_{2+k}(w)$, we obtain that $P^{1+k}_k(xa)\in P^{2+k}_k(\supC_{2+k})$, Hence, for $a\in E_1\setminus E_k$ we have shown that
      \[
        xa\in P_{1+k}(M)\,,
      \]
      which was to be shown.

      \noindent
      (B) On the other hand, if $a\in E_1\cap E_k$, then
        \begin{align}\label{lab11}
          xa\in L_1\| P_k(M) \Rightarrow P^{1+k}_k(xa)\in P_k(M)\subseteq \supC_k\,.
        \end{align}
        Thus, $xa\in (P^{1+k}_k)^{-1}(\supC_k)$ is satisfied, and it remains to show that
        \begin{align}\label{lab12}
          xa \in (P^{1+k}_k)^{-1}P^{2+k}_k(\supC_{2+k})\,.
        \end{align}
        However, from (\ref{cross}) and Lemma~\ref{lem10} it follows that
        \begin{align}\label{lab13}
          P^{1+k}_k(xa)\in P^{2+k}_k(P^{2+k}_2)^{-1}(L_2)\cap P_k(M)\,.
        \end{align}
        In addition, we have from the definition of $\supc_{2+k}$ that
        \begin{align}\label{lab14}
          P_{2+k}(w)\in (P^{2+k}_2)^{-1}(L_2)\,.
        \end{align}
        As $P_k^{2+k}(P_{2+k}(w))$ is obviously a prefix of $P_k^{2+k}(P_{2+k}(w)a)$, $P^{2+k}_k(P_{2+k}(w)a) = P^{1+k}_k(x)a \in P_k(M) \subseteq \supc_k \subseteq P^{2+k}_k(P^{2+k}_2)^{-1}(L_2)$, and the projection $P^{2+k}_k$ is an $(P^{2+k}_2)^{-1}(L_2)$-observer, there is $u\in E_{2+k}^*$ such that
        \begin{align}\label{lab15}
          P_{2+k}(w)ua\in (P^{2+k}_2)^{-1}(L_2)
        \end{align}
        with $P^{2+k}_k(P_{2+k}(w)ua) = P^{2+k}_k(P_{2+k}(w)a)$, i.e., $u \in (E_2\setminus E_k)^*$. Since the language $L_2$ is prefix-closed, so is by Lemma~\ref{lem1} the language $(P^{1+k}_2)^{-1}(L_2)$. Therefore, $P_{2+k}(w)u\in (P^{2+k}_2)^{-1}(L_2)$ is satisfied. Furthermore, note that $P^{2+k}_k(P_{2+k}(w)u) = P^{1+k}_k(x) \in P_k(M) \subseteq \supc_k$ means that $P_{2+k}(w)u \in (P^{2+k}_k)^{-1}(\supc_k)$. Together, we have by the definition of synchronous product that
        \begin{align}\label{lab16}
          P_{2+k}(w)u \in L_2 \| \supC_k\,.
        \end{align}
        As the projection $P^{2+k}_k$ is also OCC for $(P^{2+k}_2)^{-1}(L_2)$, and $P_{2+k}(w)ua$ satisfies that $a\in E_k$, $u\in (E_2\setminus E_k)^*$, and $a\in E_{u}$, it follows that
        \[
          u\in E_{u}^*\,.
        \]
        Since $P_{2+k}(w)\in \supC_{2+k}$, $\supc_{2+k}$ is controllable with respect to $L_2 \| \supC_k$ and $E_{2+k,u}$, and $P_{2+k}(w)u \in L_2 \| \supC_k$ is satisfied, Lemma~\ref{lem2} implies that
        \begin{align}\label{lab17}
          P_{2+k}(w)u \in \supC_{2+k}\,.
        \end{align}
        Finally, since $P^{2+k}_k(P_{2+k}(w)ua) = P^{1+k}_k(x)a \in P_k(M) \subseteq \supc_k$ by $(\ref{lab13})$, it follows by this, $(\ref{lab15})$, and the definition of synchronous product that $P_{2+k}(w)ua \in L_2 \| \supC_k$. From this and controllability of $\supc_{2+k}$ with respect to $L_2 \| \supc_k$ and $E_{2+k,u}$, it follows that $P_{2+k}(w)ua \in \supC_{2+k}$, i.e.,
        \begin{align*}
          P^{1+k}_k(x)a & = P^{2+k}_k(P_{2+k}(w)ua) \in P^{2+k}_k(\supC_{2+k})\,,
        \end{align*}
        which proves $(\ref{lab12})$. Thus,
        \[
          xa\in P_{1+k}(M)
        \]
        which was to be shown.

      \smallskip\noindent
      (III) The case $P_{2+k}(M) E_{2+k,u} \cap L_2 \| P_k(M) \| P^{1+k}_k(L_1 \| P_k(M)) \subseteq P_{2+k}(M)$ is proven analogously to the previous one.

    Hence, we have shown that $M$ is conditionally controllable with respect to $L=L_1 \| L_2 \| L_k$ and $(E_{1+k,u}, E_{2+k,u}, E_{k,u})$ and, thus,
    \[
      M\subseteq \supcc\,.
    \]

    \medskip
    To prove the opposite inclusion, $\supcc \subseteq M$, by Lemma~\ref{lem11} it is sufficient to show that
    \begin{itemize}
      \item $P_k(\supCC)\subseteq \supC_k$ and
      \item $P_{i+k}(\supCC)\subseteq \supC_{i+k}$, for $i=1,2$.
    \end{itemize}
    To prove this note that $P_k(\supCC)  \subseteq P_k(L) = P_k(L_1\| L_2) \cap L_k$, where the last equality is by using Lemma~\ref{lemF}, and that also $P_k(\supCC)\subseteq P_k(K)$. Thus, we have
    \begin{align*}
      P_k(\supCC) \subseteq P_k(K)\cap L_k \cap P_k(L_1\| L_2) = P_k(K)\| L_k \| P_k(L_1\| L_2)\,.
    \end{align*}
    As, in addition, $P_k(\supCC)$ is controllable with respect to $L_k$ and $E_{k,u}$,
    \[
      P_k(\supCC)\subseteq \supC_k
    \]
    is satisfied. Further, $P_{1+k}(\supCC) \subseteq P_{1+k}(K)$ and $P_{1+k}(\supCC) \subseteq P_{1+k}(L) \subseteq L_1\| L_k$, which implies that
    \[
      P_{1+k}(\supCC)\subseteq P_{1+k}(K) \| L_1\,.
    \]
    We know that the language $P_{1+k}(\supCC)$ is controllable with respect to the language $L_1 \| P_k(\supcc) \| P^{2+k}_k(L_2\| P_k(\supcc))$ and $E_{1+k,u}$. Recall that by $(\ref{lab2})$
    \[
      P_k(\supcc)\subseteq \supC_k \subseteq P^{2+k}_k(P^{2+k}_2)^{-1}(L_2)\,.
    \]
    Next, the following holds:
    \begin{multline*}
      L_1 \| P_k(\supcc) \| P^{2+k}_k(L_2\| P_k(\supcc)) \\
      \begin{aligned}
        & = L_1 \| P_k(\supcc) \| P_k(\supcc) \cap P^{2+k}_k(P^{2+k}_2)^{-1}(L_2) \\
        & = L_1 \| P_k(\supcc) \| P_k(\supcc)\\
        & = L_1 \| P_k(\supcc)\,.
      \end{aligned}
    \end{multline*}
    Since $P_k(\supcc)$ is controllable with respect to $L_k$ and $E_{k,u}$, it is also controllable with respect to $\supc_k\subseteq L_k$ and $E_{k,u}$. As $P_{1+k}(\supcc)$ is controllable with respect to $L_1\| P_k(\supcc)$ and $E_{1+k,u}$, and $L_1\|P_k(\supcc)$ is controllable with respect to $L_1\| \supc_k$ and $E_{1+k,u}$ by Proposition~4.6 in \cite{FLT} (since all the languages under consideration are prefix-closed), it follows by Lemma~\ref{lem_trans} that $P_{1+k}(\supCC)$ is controllable with respect to $L_1 \| \supC_k$ and $E_{1+k,u}$, which implies that
    \[
      P_{1+k}(\supCC)\subseteq \supC_{1+k}\,.
    \]
    The case of the property (ii.b) is proven analogously. Hence, we have proven that
    \[
      \supcc \subseteq M
    \]
    and the proof is complete.
  \end{proof}

  Note that if we know that the specification language $K$ is included in the global language $L$, the computation can be simplified as shown in the following corollary.
  \begin{cor}\label{cor1}
    Let $K\subseteq L=L_1\|L_2\|L_k$ be two prefix-closed languages over an event set $E=E_1\cup E_2\cup E_k$, where $L_i\subseteq E_i^*$, for $i=1,2,k$, and let $K$ be conditionally decomposable. Define the languages
    \begin{align*}
      \supC_k     & = \supC(P_k(K),L_k,E_{k,u})\,,\\
      \supC_{1+k} & = \supC(P_{1+k}(K), L_1 \| \supC_k,E_{1+k,u})\,,\\
      \supC_{2+k} & = \supC(P_{2+k}(K), L_2 \| \supC_k,E_{2+k,u})\,.
    \end{align*}
    Let the natural projection $P^{i+k}_k$ be an $(P^{i+k}_i)^{-1}(L_i)$-observer and OCC for the language $(P^{i+k}_i)^{-1}(L_i)$, for $i=1,2$.
    Then
    \begin{align*}
      \supC_k \| \supC_{1+k} \| \supC_{2+k}%\\
      = \supCC(K, L, (E_{k,u}, E_{1+k,u}, E_{2+k,u}))\,.
    \end{align*}
  \end{cor}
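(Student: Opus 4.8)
The plan is to obtain Corollary \ref{cor1} as a direct specialization of Theorem \ref{thm2}: under the extra hypothesis $K\subseteq L$, both the target language on the right-hand side and the three defining ``plant'' languages of Theorem \ref{thm2} collapse to the simpler forms appearing in the corollary, after which the conclusion is immediate. Concretely, I would show (a) $K\cap L=K$, (b) $P_k(K)\| P_k(L_1\| L_2)\| L_k=P_k(K)$, and (c) $P_{i+k}(K)\| L_i=P_{i+k}(K)$ for $i=1,2$, since these three identities turn the $\supC_k,\supC_{1+k},\supC_{2+k}$ of Theorem \ref{thm2} into exactly those of the corollary (the observer and OCC hypotheses are literally the same).

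Claim (a) is trivial: $K\subseteq L$ gives $K\cap L=K$, so $\supCC(K\cap L,L,\cdot)=\supCC(K,L,\cdot)$. For claim (b), I would apply $P_k$ to $K\subseteq L=L_1\|L_2\|L_k$ and invoke Lemma~\ref{lemF} exactly as in the proof of Theorem \ref{thm2}, where it is used to write $P_k(L)=P_k(L_1\| L_2)\cap L_k$. This yields $P_k(K)\subseteq P_k(L_1\| L_2)\cap L_k$, hence in particular $P_k(K)\subseteq P_k(L_1\|L_2)$ and $P_k(K)\subseteq L_k$. Since the three languages $P_k(K)$, $P_k(L_1\|L_2)$, $L_k$ are all over $E_k$, their synchronous product is just intersection, and the inclusions force $P_k(K)\| P_k(L_1\| L_2)\| L_k=P_k(K)$. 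Therefore the Theorem's first language equals $\supC(P_k(K),L_k,E_{k,u})$, which is the corollary's $\supC_k$.

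For claim (c) I would argue symmetrically for $i=1,2$. Applying $P_{1+k}$ to $K\subseteq L$ and using Lemma~\ref{lemma:Wonham} gives $P_{1+k}(K)\subseteq P_{1+k}(L)\subseteq L_1\| L_k$, exactly the inclusion already recorded in the proof of Theorem~\ref{thm2}. In particular $P_{1+k}(K)\subseteq(P^{1+k}_1)^{-1}(L_1)$, so by the definition of the synchronous product $P_{1+k}(K)\| L_1=P_{1+k}(K)\cap(P^{1+k}_1)^{-1}(L_1)=P_{1+k}(K)$ (this is the same reasoning underlying Lemma~\ref{simple}). Hence the Theorem's $\supC_{1+k}=\supC(P_{1+k}(K)\| L_1,\,L_1\|\supC_k,\,E_{1+k,u})$ reduces to the corollary's $\supC(P_{1+k}(K),\,L_1\|\supC_k,\,E_{1+k,u})$, and the case $i=2$ is identical with indices swapped.

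Once (a)--(c) are in place, the hypotheses of Corollary \ref{cor1} coincide verbatim with those of Theorem \ref{thm2} applied to the conditionally decomposable language $K$, so $\supC_k\|\supC_{1+k}\|\supC_{2+k}=\supCC(K,L,\cdot)$ follows at once. I do not expect a genuine obstacle here; the only point requiring care is the reduction of each synchronous product to an intersection and the bookkeeping of which event set each factor lives over, so that the inclusions $P_k(K)\subseteq L_k$ and $P_i(K)\subseteq L_i$ are legitimately exploited via Lemmas~\ref{lemma:Wonham} and \ref{lemF}.
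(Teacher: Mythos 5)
Your proposal is correct and takes essentially the same route as the paper's own proof of Corollary~\ref{cor1}: the paper likewise specializes Theorem~\ref{thm2} by observing that $K\subseteq L$ gives $P_k(K)\subseteq P_k(L_1\|L_2)\,\|\,L_k$ and $P_{i+k}(K)\subseteq (P^{i+k}_i)^{-1}(L_i)$ for $i=1,2$, so the three $\supC$ languages of the theorem collapse to those of the corollary. The only cosmetic difference is that the paper obtains $P_{i+k}(K)\subseteq (P^{i+k}_i)^{-1}(L_i)$ directly from $L\subseteq P_i^{-1}(L_i)$ rather than via Lemma~\ref{lemma:Wonham}, which changes nothing of substance.
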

  \begin{proof}
    If $K\subseteq L$, then
    \begin{align*}
      P_k(K) & \subseteq P_k(L)\\
             & =         P_k(L_1\| L_2 \| L_k)\\
             & =         P_k(L_1 \| L_2) \| L_k, ~~ \mbox{by Lemma}~\ref{lemma:Wonham}\,.
    \end{align*}
    From $L_1 \| L_2 \| L_k = P_1^{-1}(L_1)\cap P_2^{-1}(L_2)\cap P_k^{-1}(L_k)$
    we also have that
    \begin{align*}
      P_{i+k}(K) & \subseteq P_{i+k}(P_i^{-1}(L_i))
                  = (P^{i+k}_i)^{-1}(L_i)\,,
    \end{align*}
    for $i=1,2$.
    Since $P_k(K)\subseteq P_k(L_1 \| L_2) \| L_k$ and
    $P_{i+k}(K)\subseteq (P^{i+k}_i)^{-1}(L_i)$, for $i=1,2$,
    the proof then follows from the previous theorem.
  \end{proof}

  In addition to the procedure for computation of $\supcc$ in a distributed way, another consequence of the theorem above is interesting. Namely, under the conditions of Theorem \ref{thm2}, $\supcc$ is conditionally decomposable (cf. Lemma~\ref{TCS}).

  Even more, the supremal conditionally controllable sublanguage is controllable with respect to the global plant as we show below and, consequently, the supremal conditionally controllable sublanguage is included in the global supremal controllable sublanguage. This is not a surprise because the language synthesized using the coordination architecture is more restrictive than the language synthesized using (monolithic) supervisory control of the global plant.

  \begin{theorem}\label{thm3}
    In the setting of Corollary \ref{cor1} we have that
    \[
      \supCC(K, L, (E_{k,u}, E_{1+k,u}, E_{2+k,u}))
    \]
    is controllable with respect to $L$ and $E_u$, i.e.,
    \begin{align*}
      \supCC(K, L, (E_{k,u}, E_{1+k,u}, E_{2+k,u}))%\\
      \subseteq \supC(K, L, E_{u})\,.
    \end{align*}
  \end{theorem}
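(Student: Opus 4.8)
The plan is to work with $M := \supC_k \| \supC_{1+k} \| \supC_{2+k}$, which by Corollary~\ref{cor1} equals $\supCC(K,L,(E_{k,u},E_{1+k,u},E_{2+k,u}))$. It then suffices to prove that $M$ is controllable with respect to $L$ and $E_u$: since $M\subseteq K$ (as in part~1 of the proof of Theorem~\ref{thm2}, using $K\cap L=K$ in the setting of Corollary~\ref{cor1}) and $\supC(K,L,E_u)$ is the union of all controllable sublanguages of $K$, the desired inclusion $\supCC\subseteq\supC(K,L,E_u)$ follows at once. As all languages are prefix-closed, controllability of $M$ reduces to showing $M\,E_u\cap L\subseteq M$.

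First I would record the membership criterion for the synchronous product: since the three factors are over $E_k$, $E_{1+k}$, $E_{2+k}$, a word $w$ lies in $M$ if and only if $P_k(w)\in\supC_k$, $P_{1+k}(w)\in\supC_{1+k}$, and $P_{2+k}(w)\in\supC_{2+k}$. Likewise $sa\in L=L_1\|L_2\|L_k$ gives $P_1(sa)\in L_1$, $P_2(sa)\in L_2$, and $P_k(sa)\in L_k$. So, fixing $s\in M$, $a\in E_u$ with $sa\in L$, it remains to push $a$ through each of the three projections and land in the corresponding factor.

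The core is a cascade mirroring the definitions of the factors. I would treat the coordinator component first: if $a\in E_k$ then $a\in E_{k,u}$, $P_k(sa)=P_k(s)a$, and $P_k(sa)\in L_k$, so controllability of $\supC_k$ with respect to $L_k$ and $E_{k,u}$ yields $P_k(sa)\in\supC_k$; if $a\notin E_k$ then $P_k(sa)=P_k(s)\in\supC_k$. In either case
\[
  P_k(sa)\in\supC_k .
\]
Next, for the $(1+k)$-component, if $a\in E_{1+k}$ then $a\in E_{1+k,u}$ and $P_{1+k}(sa)=P_{1+k}(s)a$; writing $L_1\|\supC_k=(P^{1+k}_1)^{-1}(L_1)\cap(P^{1+k}_k)^{-1}(\supC_k)$, the memberships $P^{1+k}_1(P_{1+k}(s)a)=P_1(sa)\in L_1$ and $P^{1+k}_k(P_{1+k}(s)a)=P_k(sa)\in\supC_k$ (the latter just established) give $P_{1+k}(s)a\in L_1\|\supC_k$, whence controllability of $\supC_{1+k}$ with respect to $L_1\|\supC_k$ and $E_{1+k,u}$ yields $P_{1+k}(sa)\in\supC_{1+k}$; if $a\notin E_{1+k}$ then $P_{1+k}(sa)=P_{1+k}(s)\in\supC_{1+k}$. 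The $(2+k)$-component is symmetric, using that $\supC_{2+k}$ is controllable with respect to $L_2\|\supC_k$. Combining the three memberships gives $sa\in M$, as required.

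The point to get right — rather than a genuine obstacle — is the ordering of this cascade: the controllability of $\supC_k$ must be established \emph{before} the other two, because it is exactly what certifies that the test word $P_{1+k}(s)a$ (resp.\ $P_{2+k}(s)a$) lands in the plant $L_1\|\supC_k$ (resp.\ $L_2\|\supC_k$) against which $\supC_{1+k}$ (resp.\ $\supC_{2+k}$) is controllable. I would also remark that, unlike the proof of Theorem~\ref{thm2}, this direction uses neither the observer nor the OCC hypotheses: those enter only through Corollary~\ref{cor1} to identify $M$ with $\supCC$, whereas the controllability of $M$ against the global plant follows purely from the controllability of the three factors against their respective local plants.
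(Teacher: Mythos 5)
Your proof is correct, but it takes a genuinely different route from the paper's. The paper does not argue at the level of words at all: it invokes Proposition~4.6 of \cite{FLT} (controllability is preserved under synchronous product for prefix-closed languages) to conclude that $\supC_k \| \supC_{1+k} \| \supC_{2+k}$ is controllable with respect to $L_k \| (L_1\|\supC_k) \| (L_2\|\supC_k) = L\|\supC_k$, then argues analogously that $L\|\supC_k$ is controllable with respect to $L\|L_k = L$, and finally chains the two via the transitivity of controllability (Lemma~\ref{lem_trans}). Your cascade argument --- establish $P_k(sa)\in\supC_k$ first, then use that membership to certify that the test words $P_{1+k}(s)a$ and $P_{2+k}(s)a$ lie in the respective plants $L_1\|\supC_k$ and $L_2\|\supC_k$ --- in effect reproves, in the special case needed here, exactly the content of that cited proposition, and it collapses the paper's two-step detour through the intermediate plant $L\|\supC_k$ into a single direct verification. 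What the paper's route buys is brevity and reuse of general-purpose lemmas (the same Proposition~4.6 is invoked again in Theorem~\ref{thm4}); what yours buys is self-containedness --- no external reference, no need for the intermediate controllability claim $L\|\supC_k$ controllable with respect to $L$ (whose ``analogously'' in the paper glosses over a nontrivial instantiation of the product lemma with $L$ controllable with respect to itself) --- plus an explicit identification of where prefix-closedness enters (reducing controllability to $M E_u\cap L\subseteq M$) and the correct observation, which the paper leaves implicit, that the observer and OCC hypotheses play no role in the controllability of $M$ itself but only in identifying $M$ with $\supCC$ via Corollary~\ref{cor1}. One small point worth stating explicitly in your write-up: the final inclusion needs $M\subseteq K$, which in the setting of Corollary~\ref{cor1} follows from $\supC_k\|\supC_{1+k}\|\supC_{2+k}\subseteq P_k(K)\|P_{1+k}(K)\|P_{2+k}(K)=K$ by conditional decomposability; you gesture at this correctly, and it is all that is required.
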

  \begin{proof}
    It is sufficient to show that
    \[
      \supcc := \supCC(K, L, (E_{k,u}, E_{1+k,u}, E_{2+k,u}))
    \]
    is controllable with respect to $L = L_1\| L_2 \| L_k$ and $E_u$. Notice that there exist $\supc_k\subseteq E_k$, $\supc_{1+k}\subseteq E_{1+k}$, and $\supc_{2+k}\subseteq E_{2+k}$ as defined in Corollary~\ref{cor1} so that
    \[
      \supcc = \supc_k \| \supc_{1+k} \| \supc_{2+k}\,.
    \]
    In addition, we know that
    \begin{itemize}
      \item $\supc_k$ is controllable with respect to $L_k$ and $E_{k,u}$,
      \item $\supc_{1+k}$ is controllable with respect to $L_1 \| \supc_k$ and $E_{1+k,u}$,
      \item $\supc_{2+k}$ is controllable with respect to $L_2 \| \supc_k$ and $E_{2+k,u}$\,.
    \end{itemize}
    By Proposition~4.6 in \cite{FLT} (since all the languages under consideration are prefix-closed)
    \[
      \supcc=\supc_k \| \supc_{1+k} \| \supc_{2+k}
    \]
    is controllable with respect to
    \[
      L_k \| (L_1 \| \supc_k) \| (L_2 \| \supc_k) = L\|\supc_k
    \]
    and $E_u$. Analogously, we can obtain that $L\|\supc_k$ is controllable with respect to $L\|L_k = L$ and $E_u$. Finally, by the transitivity of controllability, Lemma~\ref{lem_trans}, we obtain that $\supcc$ is controllable with respect to $L$ and $E_u$, which was to be shown.
  \end{proof}

  The previous theorem demonstrates that the result of our approach shown in Theorem~\ref{thm2} is always controllable with respect to $L$ and $E_u$. Now, we show that if some additional conditions are also satisfied, then the resulting supremal conditionally controllable sublanguage constructed in Theorem~\ref{thm2} is also optimal, i.e., it coincides with the supremal controllable sublanguage of $K$ with respect to $L$ and $E_u$.

  The following result concerning observer properties is proven in \cite[Proposition~4.5]{FLT}.
  \begin{lem}\label{lem45}
    Let $L_i\subseteq E_i^*$, $i=1,2$, be two (prefix-closed) languages, and let $P_i: (E_1\cup E_2)^*\to E_i^*$, where $i=1,2,k$ and $E_k\subseteq E_1\cup E_2$, be natural projections. If $E_1\cap E_2 \subseteq E_k$ and $P^{i}_{k\cap i}$ is an $L_i$-observer, for $i=1,2$, then the projection $P_k$ is an $L_1\|L_2$-observer.
  \end{lem}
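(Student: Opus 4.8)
The plan is to unfold the definition of the $L_1\|L_2$-observer property and reduce it to the two given local observer properties, the only genuine work being a careful interleaving of the two local witness strings. Throughout I would write $L=L_1\|L_2$ (which is prefix-closed since $L_1,L_2$ are), use $Q_i=P^i_{i\cap k}\colon E_i^*\to(E_i\cap E_k)^*$ for the local projections, and $R_i=P^k_{i\cap k}\colon E_k^*\to(E_i\cap E_k)^*$. Because $E_1\cap E_2\subseteq E_k$ we have $E_k=(E_1\cap E_k)\cup(E_2\cap E_k)$ with $(E_1\cap E_k)\cap(E_2\cap E_k)=E_1\cap E_2$, and Lemma~\ref{lemma:Wonham} gives $P_k(L)=Q_1(L_1)\,\|\,Q_2(L_2)$ together with the commutations $R_i\circ P_k=Q_i\circ P_i$, which drive the whole argument.

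First I would set up the reduction. Take $t\in P_k(L)$ and $s\in L$ with $P_k(s)$ a prefix of $t$, and write $t=P_k(s)\,v$. Projecting to each local coordinate, $Q_i(P_i(s))=R_i(P_k(s))$ is a prefix of $R_i(t)\in Q_i(L_i)$, while $P_i(s)\in L_i$; so the $L_i$-observer hypothesis on $Q_i$ yields $u_i\in E_i^*$ with $P_i(s)u_i\in L_i$ and $Q_i(u_i)=R_i(v)$, for $i=1,2$. The goal then becomes to merge $u_1$ and $u_2$ into a single $u\in E^*$ with $P_1(u)=u_1$, $P_2(u)=u_2$ and $P_k(u)=v$: the first two equalities force $su\in P_1^{-1}(L_1)\cap P_2^{-1}(L_2)=L$, and the third gives $P_k(su)=t$, which is exactly what the observer property demands.

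The main obstacle, and the only nontrivial step, is this merge. The key observation is that the shared events align: projecting $u_1$ further onto $E_1\cap E_2$ gives the same string as projecting $v$ (since $E_1\cap E_2\subseteq E_1\cap E_k$ and $Q_1(u_1)=R_1(v)$), and symmetrically for $u_2$; hence $u_1$ and $u_2$ carry identical $E_1\cap E_2$-subsequences. I would then decompose each witness into local blocks and synchronised events, $u_1=\alpha_0 c_1\alpha_1\cdots c_p\alpha_p$ with $\alpha_i\in(E_1\setminus E_k)^*$ and $c_1\cdots c_p=R_1(v)$, and likewise $u_2=\beta_0 d_1\beta_1\cdots d_q\beta_q$ with $\beta_i\in(E_2\setminus E_k)^*$, and build $u$ by scanning $v=\sigma_1\cdots\sigma_m$ from left to right: before emitting each $\sigma_r$, flush the local $E_1$-block preceding its occurrence in $u_1$ (if $\sigma_r\in E_1$) and the local $E_2$-block preceding its occurrence in $u_2$ (if $\sigma_r\in E_2$), emit $\sigma_r$, and append the trailing blocks $\alpha_p,\beta_q$ at the very end.

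Finally I would verify the three required identities. Because $E_1\cap E_2\subseteq E_k$, the two families of local events are disjoint and neither lies in the other subsystem's alphabet, so $P_1$ erases every $E_2$-local symbol and every $\sigma_r\in E_2\setminus E_1$; reading off the $E_1$-content of $u$ therefore returns exactly $\alpha_0c_1\cdots c_p\alpha_p=u_1$, and symmetrically $P_2(u)=u_2$. Projecting $u$ onto $E_k$ deletes all local blocks and leaves $\sigma_1\cdots\sigma_m=v$, so $P_k(u)=v$, closing the argument. I expect the whole difficulty to be concentrated in making the interleaving bookkeeping precise; the alignment of shared events guarantees the scan is always well defined, and the hypothesis $E_1\cap E_2\subseteq E_k$ is used at essentially every step --- to invoke Lemma~\ref{lemma:Wonham}, to keep the local alphabets disjoint so that $P_i$ recovers $u_i$, and to ensure every shared move is already recorded in $v$.
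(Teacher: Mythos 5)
Your proposal is correct, but there is no in-paper proof to compare it against: the paper imports this statement verbatim from \cite[Proposition~4.5]{FLT} and gives no argument of its own, so your blind reconstruction is effectively a self-contained replacement for that citation. The reduction is sound at every step: $t\in P_k(L_1\|L_2)=P^1_{1\cap k}(L_1)\,\|\,P^2_{2\cap k}(L_2)$ by Lemma~\ref{lemma:Wonham} (this is where $E_1\cap E_2\subseteq E_k$ first enters) does give $R_i(t)\in Q_i(L_i)$; the commutation $R_i\circ P_k=Q_i\circ P_i$ holds letter-by-letter; and the two applications of the local observer property legitimately produce $u_i\in E_i^*$ with $P_i(s)u_i\in L_i$ and $Q_i(u_i)=R_i(v)$. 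The merge, which you rightly identify as the only real work, also goes through: the equalities $Q_i(u_i)=R_i(v)$ are exactly the consistency conditions making $v$ itself a legal interleaving schedule, since $E_k\subseteq E_1\cup E_2$ guarantees every letter of $v$ is claimed by at least one side, and a shared letter $\sigma_r\in E_1\cap E_2$ is simultaneously some $c_\ell$ and some $d_{m'}$ and is emitted once. Your verification that $P_1(u)=u_1$ correctly relies on $E_2\setminus E_k\subseteq E_2\setminus E_1$ (again the hypothesis $E_1\cap E_2\subseteq E_k$), and $P_k(u)=v$ is immediate since the flushed blocks lie in $(E_i\setminus E_k)^*$. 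The only thing a referee would ask you to tighten is the ``flush'' bookkeeping, e.g.\ by writing $u=\gamma_1\sigma_1\gamma_2\sigma_2\cdots\gamma_m\sigma_m\,\alpha_p\beta_q$, where $\gamma_r$ concatenates $\alpha_{\ell-1}$ when $\sigma_r$ is the $\ell$-th letter of $R_1(v)$ and $\beta_{m'-1}$ when $\sigma_r$ is the $m'$-th letter of $R_2(v)$; but that is presentation, not a gap. Also note that prefix-closedness of $L_1\|L_2=P_1^{-1}(L_1)\cap P_2^{-1}(L_2)$, which you use to identify $L$ with $\overline{L}$ in Definition~\ref{def5}, follows from Lemma~\ref{lem1}, so your proof uses only material already available in the paper.
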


  In the following lemma, we prove that conditions of Theorem~\ref{thm2} imply that the projection $P_k$ is OCC for $L$.
  \begin{lem}\label{lem46}
    Let $L_i\subseteq E_i^*$, $i=1,2$, be two (prefix-closed) languages, and let $P_i: (E_1\cup E_2)^*\to E_i^*$, where $i=1,2,k$ and $E_k\subseteq E_1\cup E_2$, be natural projections. Denote by $E_u\subseteq E_1\cup E_2$ the set of uncontrollable events. If $E_1\cap E_2 \subseteq E_k$ and $P^{i+k}_k$ is OCC for $(P^{i+k}_i)^{-1}(L_i)$, for $i=1,2$, then the natural projection $P_k$ is OCC for $L=L_1\|L_2\|L_k$.
  \end{lem}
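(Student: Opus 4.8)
The plan is to verify the OCC condition for $P_k$ directly from its definition, reducing each instance to an application of the OCC hypotheses on the two subsystem projections. First I would record two elementary facts. Since $L_1$, $L_2$, and $L_k$ are prefix-closed, so is $L=L_1\|L_2\|L_k$, whence $\overline{L}=L$. Moreover, the hypothesis $E_1\cap E_2\subseteq E_k$ yields the decomposition $E\setminus E_k=(E_1\setminus E_k)\cup(E_2\setminus E_k)$, which is a \emph{disjoint} union because $(E_1\setminus E_k)\cap(E_2\setminus E_k)=(E_1\cap E_2)\setminus E_k=\emptyset$. Thus every event of $E\setminus E_k$ belongs to exactly one of the two subsystems.

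Now take any $s\in L$ of the OCC form, say $s=s'\sigma_0\sigma_1\cdots\sigma_\ell$ (the case $s=\sigma_1\cdots\sigma_\ell$ being analogous), with $\sigma_0,\sigma_\ell\in E_k$, $\sigma_i\in E\setminus E_k$ for $1\le i\le\ell-1$, and $\sigma_\ell\in E_u$; the goal is to show that every intermediate $\sigma_i$ is uncontrollable. Fix $i\in\{1,2\}$ and consider $P_{i+k}(s)$. Because $P^{i+k}_i\circ P_{i+k}=P_i$ and $P_i(s)\in L_i$, we get $P_{i+k}(s)\in(P^{i+k}_i)^{-1}(L_i)$. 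The crucial observation is how $P_{i+k}$ acts on the tail: $\sigma_0$ and $\sigma_\ell$ lie in $E_k\subseteq E_{i+k}$ and so survive, while the surviving intermediate events are exactly those in $E_{i+k}\cap(E\setminus E_k)=E_i\setminus E_k$. Consequently $P_{i+k}(s)$ ends in $\sigma_0\tau_1\cdots\tau_m\sigma_\ell$, where the $\tau_j$ are the $E_i\setminus E_k$ events among $\sigma_1,\ldots,\sigma_{\ell-1}$, all lying in $E_{i+k}\setminus E_k$, and $\sigma_0$ remains the last $E_k$-symbol before $\sigma_\ell$. This is precisely a string of the OCC form for the projection $P^{i+k}_k:E_{i+k}^*\to E_k^*$, so the hypothesis that $P^{i+k}_k$ is OCC for $(P^{i+k}_i)^{-1}(L_i)$, together with $\sigma_\ell\in E_u$, forces $\tau_j\in E_u$ for every $j$.

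Ranging over $i=1,2$ then finishes the argument: by the disjoint decomposition above, each intermediate $\sigma_i$ lies in $E_1\setminus E_k$ or in $E_2\setminus E_k$, and in either case the corresponding application of the subsystem OCC property (with $i=1$ or $i=2$) shows $\sigma_i\in E_u$. Hence all intermediate events are uncontrollable and $P_k$ is OCC for $L$. I expect the main obstacle to be the bookkeeping in the middle paragraph, namely checking that after applying $P_{i+k}$ the tail is genuinely in OCC form, i.e.\ that no $E_k$-symbol sneaks in among the surviving intermediate events, so that $\sigma_0$ is still the last $E_k$-symbol before $\sigma_\ell$. This is exactly where $E_1\cap E_2\subseteq E_k$ is used, guaranteeing that the retained intermediate events stay in $E_{i+k}\setminus E_k$; the membership $P_{i+k}(s)\in(P^{i+k}_i)^{-1}(L_i)$ and Lemma~\ref{lem1} (prefix-closedness of the inverse image) are the only other ingredients.
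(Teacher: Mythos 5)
Your proof is correct and takes essentially the same route as the paper's: apply $P_{i+k}$ to $s$, note that $P_{i+k}(s)\in(P^{i+k}_i)^{-1}(L_i)$ and that its tail retains the OCC form (surviving intermediate events lie in $E_i\setminus E_k$, so $\sigma_0$ stays the last $E_k$-symbol), invoke the OCC hypothesis for $P^{i+k}_k$, and conclude since every intermediate event survives under $P_{1+k}$ or $P_{2+k}$. The paper's proof is merely terser, leaving implicit the bookkeeping you spell out.
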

  \begin{proof}
    Let $s\in L$ be of the form $s=s'\sigma_0\sigma_1\dots\sigma_{k-1}\sigma_k$, for some $k\ge 1$, and assume that $\sigma_0,\sigma_k\in E_{k}$, $\sigma_i\in E\setminus E_k$, for $i=1,2,\dots,k-1$, and $\sigma_k\in E_u$. We need to show that $\sigma_i\in E_u$, for all $i=1,2,\dots,k-1$. However, $P_{i+k}(s) = P_{i+k}(s')\sigma_0P_{i+k}(\sigma_1\dots\sigma_{k-1})\sigma_k\in (P^{i+k}_i)^{-1}(L_i)$ and the OCC property implies that $P_{i+k}(\sigma_1\dots\sigma_{k-1})\in E_u^*$, for $i=1,2$. Consider $\sigma\in\{\sigma_1,\sigma_2,\dots,\sigma_{k-1}\}$. Then, $\sigma\in (E_1\cup E_2)\setminus E_k$. Without loss of generality, assume that $\sigma\in E_1$. Then, $P_{1+k}(\sigma)=\sigma\in E_u$ and $P_{2+k}(\sigma)=\eps\in E_u^*$. Thus, $\{\sigma_1,\sigma_2,\dots,\sigma_{k-1}\}\subseteq E_u$, which was to be shown.
  \end{proof}

  \begin{theorem}\label{thm4}
    Consider the setting of Corollary \ref{cor1}. If, in addition, $L_k\subseteq P_k(L)$ and $P_{i+k}$ is OCC for the language $P_{i+k}^{-1}(L_i\|L_k)$, for $i=1,2$, then
    \begin{align*}
      \supCC(K, L, (E_{k,u}, E_{1+k,u}, E_{2+k,u})) = \supC(K, L, E_{u})\,.
    \end{align*}
  \end{theorem}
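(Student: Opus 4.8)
The plan is to read this equality as one easy inclusion plus a converse. Theorem~\ref{thm3} already delivers $\supCC(K,L,(E_{k,u},E_{1+k,u},E_{2+k,u}))\subseteq \supC(K,L,E_u)$, so writing $\supcc:=\supCC(K,L,(E_{k,u},E_{1+k,u},E_{2+k,u}))$ and $\supc:=\supC(K,L,E_u)$, only $\supc\subseteq\supcc$ remains. By Theorem~\ref{existence} the language $\supcc$ is the union of all conditionally controllable sublanguages of $K$, and $\supc\subseteq K$, so it is enough to prove that $\supc$ is itself conditionally controllable for $(G_1,G_2,G_k)$; supremality then forces $\supc\subseteq\supcc$. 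Hence the whole argument reduces to checking the three conditions (i), (ii.a), (ii.b) of Definition~\ref{def:conditionalcontrollability} for the single language $\supc$, using only that $\supc$ is controllable with respect to the global plant $L$ and $E_u$.

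The common engine for the three checks is a lifting argument. To show, for instance, that $P_k(\supc)$ is controllable with respect to $L_k$, I would start from $x=P_k(w)$ with $w\in\supc$ and an uncontrollable one-letter continuation $xa\in L_k$, and lift $a$ to a global continuation of $w$ that stays in $\supc$. The obstacle is that $a$ may be a shared event whose global execution first requires a subsystem to perform coordinator-silent local events; these must be supplied and must turn out to be uncontrollable, so that controllability of $\supc$ with respect to $L$ can absorb them. The $(P^{i+k}_i)^{-1}(L_i)$-observer property assumed in Corollary~\ref{cor1} produces exactly such a silent extension $u\in(E_i\setminus E_k)^*$ with $P_i(w)ua\in L_i$, and the accompanying OCC property forces $u\in E_u^*$. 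This is the word-extension computation already performed in the proof of Theorem~\ref{thm2}, whose structure I would reuse almost verbatim; the only change is that the final membership step invokes controllability of the global $\supc$ with respect to $L$ instead of the controllabilities of the individual $\supc_{i+k}$.

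The two extra hypotheses of this theorem are what let that engine start. To apply the observer to the coordinator string $xa$ in condition (i), one needs $xa\in P^{i+k}_k(P^{i+k}_i)^{-1}(L_i)$, and this is where I would use $L_k\subseteq P_k(L)$: since $L=L_1\|L_2\|L_k$ always gives $P_k(L)\subseteq L_k$ (as in Lemma~\ref{lemF}), the hypothesis upgrades this to $P_k(L)=L_k$, whence Lemma~\ref{lem9} yields $xa\in L_k=P_k(L)\subseteq P^{i+k}_k(P^{i+k}_i)^{-1}(L_i)$ for both $i=1,2$. The observer then applies on the $E_{1+k}$ and the $E_{2+k}$ side simultaneously, and the two silent extensions $u_1\in(E_1\setminus E_k)^*$, $u_2\in(E_2\setminus E_k)^*$ live over disjoint alphabets (because $E_1\cap E_2\subseteq E_k$); consequently $wu_1u_2a$ has the right projection into each of $L_1,L_2,L_k$ and so lies in $L$, and since $u_1u_2a\in E_u^*$, global controllability gives $wu_1u_2a\in\supc$, i.e.\ $xa\in P_k(\supc)$. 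For (ii.a) and (ii.b) the reference language of Definition~\ref{def:conditionalcontrollability} collapses, via Lemma~\ref{lem10} together with $P_k(\supc)\subseteq P^{2+k}_k(P^{2+k}_2)^{-1}(L_2)$, to $L_1\|P_k(\supc)$ (resp.\ $L_2\|P_k(\supc)$), after which only one subsystem must be synchronised and the same extension argument closes the case.

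An alternative route, in which the OCC hypothesis on $P_{i+k}$ for $P_{i+k}^{-1}(L_i\|L_k)$ becomes visibly relevant, argues purely at the level of projections: Lemma~\ref{lem45} raises the local observer assumptions to the statement that $P_k,P_{1+k},P_{2+k}$ are $L$-observers, and Lemma~\ref{lem46} together with OCC of $P_{i+k}$ for $P_{i+k}^{-1}(L_i\|L_k)$ (OCC being inherited by the sublanguage $L$) makes these projections OCC for $L$; the standard hierarchical-control fact that an OCC observer carries a controllable language to a controllable one then gives controllability of $P_k(\supc)$, $P_{1+k}(\supc)$, $P_{2+k}(\supc)$ with respect to $P_k(L)$, $P_{1+k}(L)$, $P_{2+k}(L)$, and one descends to $L_k=P_k(L)$ and to $L_i\|P_k(\supc)\subseteq P_{i+k}(L)$ using that controllability is preserved when the plant is replaced by a sublanguage. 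Either way, I expect the sole genuine difficulty to be condition (i): a single uncontrollable coordinator event can require simultaneous enabling in both subsystems, so one must manufacture two independent observer/OCC extensions and verify that their (conflict-free) interleaving actually lands in $L$. The remaining steps are bookkeeping with projections already packaged in Lemmas~\ref{lem9}, \ref{lem10}, and~\ref{lemF}.
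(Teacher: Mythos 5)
Your proposal is correct, and it actually contains two routes; the one you label ``alternative'' is, almost step for step, the paper's own proof. The paper takes the easy inclusion from Theorem~\ref{thm3}, reduces the converse to showing that $\supc:=\supC(K,L,E_u)$ satisfies (i), (ii.a), (ii.b) of Definition~\ref{def:conditionalcontrollability} (whence $\supc\subseteq\supCC$ by supremality, exactly your reduction), and then lifts the local hypotheses globally: Lemma~\ref{lem45} (applied twice, with the identity projections as trivial observers) makes $P_k$, $P_{1+k}$, $P_{2+k}$ into $L$-observers, Lemma~\ref{lem46} makes $P_k$ OCC for $L$, and for (ii) the hypothesis that $P_{i+k}$ is OCC for $P_{i+k}^{-1}(L_i\|L_k)\supseteq L$ is invoked directly on the word $sua\in L$; in each condition a single global extension $v=ua$ is produced by the relevant $L$-observer, OCC forces $u\in E_u^*$, and controllability of $\supc$ with respect to $L$ (via Lemma~\ref{lem2}, all languages being prefix-closed) closes the argument, with the reference language of (ii) bounded by $P_{i+k}(L)$ using $P_k(\supc)\subseteq L_k$. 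Your primary ``local engine'' is a genuinely different and, as far as I can check, sound variant: for (i) you need $ta\in P^{i+k}_k(P^{i+k}_i)^{-1}(L_i)$ for both $i$, which is precisely where $L_k\subseteq P_k(L)$ enters via Lemma~\ref{lem9}, and the two silent extensions $u_1\in(E_1\setminus E_k)^*$, $u_2\in(E_2\setminus E_k)^*$ do interleave conflict-freely (their alphabets are disjoint since $E_1\cap E_2\subseteq E_k$, so each projection of $wu_1u_2a$ lands in its $L_i$, hence $wu_1u_2a\in L$); for (ii) the collapse of the reference language to $L_i\|P_k(\supc)$ via Lemma~\ref{lem10} is legitimate, and your case split is the key point: when $a\in E_i\setminus E_k$ one checks directly that $sa\in L$ with no extension at all, and when $a\in E_k$ only the local observer/OCC assumptions on $P^{i+k}_k$ from Corollary~\ref{cor1} are consumed. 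A noteworthy consequence of your route, which you half-observe yourself, is that it never uses the theorem's extra hypothesis that $P_{i+k}$ is OCC for $P_{i+k}^{-1}(L_i\|L_k)$ --- that hypothesis is needed only in the global packaging, where the $L$-observer may hand you an extension over $E\setminus E_{i+k}$ even when none is required; the paper's Remark after the theorem already hints at this slack (OCC for $L$ suffices). So the paper's route buys brevity and modularity once Lemmas~\ref{lem45} and~\ref{lem46} are available, while your local route is more elementary, reuses the word-extension computations of Theorem~\ref{thm2} verbatim, and apparently operates under weaker hypotheses. Two small points to make explicit if you write it up: the observer output must be truncated at the first occurrence of $a$ (using prefix-closedness via Lemma~\ref{lem1}, as in the proof of Theorem~\ref{thm2}), and absorbing the whole uncontrollable word $u_1u_2a$ into $\supc$ requires the extended controllability of Lemma~\ref{lem2}, not just one-step controllability.
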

  \begin{proof}
    The inclusion $\subseteq$ is proven in Theorem~\ref{thm3}. Thus, we prove the other inclusion.

    From the assumptions,
    \[
      P^{i+k}_k \text{ is the } (P^{i+k}_i)^{-1}(L_i)\text{-observer, for }i=1,2,
    \]
    and
    \[
      P_k^k\text{ is an }L_k\text{-observer}
    \]
    because the observer property always holds for the identity projection.

    Now, Lemma~\ref{lem45} applied to projections $P^{1+k}_k$ and $P^{2+k}_k$ implies that
    \begin{center}
      $P_k$ is an $(P^{1+k}_1)^{-1}(L_1) \| (P^{2+k}_2)^{-1}(L_2) = L_1\|L_2$-observer.
    \end{center}
    Another application of this lemma to projections $P_k$ and $P^k_k$ implies that
    \begin{center}
      $P_k$ is an $(L_1\|L_2)\|L_k = L$-observer.
    \end{center}

    In addition, by Lemma~\ref{lem46}, the projection $P_k$ is also OCC for $L$. For short, denote
    \[
      \supc := \supc(K,L,E_u)\,.
    \]
    We now prove that $P_k(\supc)$ is controllable with respect to $L_k$ and $E_{k,u}$. To do this, assume that $t\in P_k(\supc)$, $a\in E_{k,u}$, and $ta\in L_k\subseteq P_k(L)$. Then, there exists $s\in\supc$ such that $P_k(s)=t$. As $P_k$ is the $L$-observer, there exists $v\in E^*$ such that $sv\in L$ and
    \[
      P_k(sv)=P_k(s)P_k(v)=ta\,,
    \]
    i.e., $v=ua$, for some $u\in (E\setminus E_k)^*$. Furthermore, from the OCC property of $P_k$,
    \[
      u\in E_{u}^*\,.
    \]
    From controllability of $\supc$ with respect to $L$ and $E_u$, this implies that $sua\in\supc$, which means that $P_k(sua)=ta\in P_k(\supc)$. Hence, (i) of Definition~\ref{def:conditionalcontrollability} is satisfied.

    Next, we have that
    \begin{center}
      $P_{i+k}^{i+k}$ (identities) are the $(P^{i+k}_i)^{-1}(L_i)$-observers, for $i=1,2$,
    \end{center}
    and that
    \begin{center}
      $P^{i+k}_{j+k}=P^{i+k}_k$ is the $(P^{i+k}_i)^{-1}(L_i)$-observer, for $\{i,j\}=\{1,2\}$, and \\
      $P_k^k=P^k_{i+k}$ is the $L_k$-observer, for $i=1,2$.
    \end{center}
    Then, similarly as above, Lemma~\ref{lem45} applied to projections $P^{i+k}_{i+k}$, $P^{j+k}_{i+k}$, $j\neq i$, and $P^k_{i+k}$ implies that the projections
    \begin{center}
      $P_{i+k}$ are $L$-observers, for $i=1,2$.
    \end{center}

    Thus, to prove (ii) of Definition~\ref{def:conditionalcontrollability}, assume that, for some $1\le i\le 2$,
    \begin{itemize}
      \item $t\in P_{i+k}(\supc)$,
      \item $a\in E_{i+k,u}$, and
      \item $ta\in L_i\|P_k(\supc)\|P^{j+k}_k(L_j\|P_k(\supc))$, for $j\neq i$.
    \end{itemize}
    Then, there exists $s\in\supc$ such that $P_{i+k}(s)=t$. As $P_{i+k}$ is the $L$-observer, and
    \[
      L_i\|P_k(\supc)\|P^{j+k}_k(L_j\|P_k(\supc))\subseteq P_{i+k}(L) = L_i\|L_k\|P^{j+k}_k(L_j\|L_k),\ j\neq i\,,
    \]
    because
    \[
      P_k(\supc)\subseteq P_k(K)\subseteq P_k(L)\subseteq L_k\,,
    \]
    there exists $v\in E^*$ such that $sv\in L$ and
    \[
      P_{i+k}(sv)=P_{i+k}(s)P_{i+k}(v)=ta\,,
    \]
    i.e., $v=ua$, for some $u\in (E\setminus E_{i+k})^*$. Since $P_{i+k}$ is OCC for $P_{i+k}^{-1}(L_i\|L_k)$ and $sua\in L\subseteq P_{i+k}^{-1}(L_i\|L_k)$, we obtain that $u\in E_{u}^*$. Finally, from the controllability of $\supc$ with respect to $L$ and $E_u$, we obtain that $sua\in\supc$. This means that $P_{i+k}(sua)=ta\in P_{i+k}(\supc)$, which was to be shown.
  \end{proof}

  \begin{rem}
    Note that it is sufficient to assume that $P_{i+k}$ is OCC for $L$. This assumption is less restrictive than the one used in the theorem. Unfortunately, we do not know how to verify this property without computing the whole plant $L$. On the other hand, if $P_{i+k}$ is OCC for $P_i^{-1}(L_i)$, for $i=1,2$, then the theorem holds as well.

    Furthermore, for the verification of $L_k\subseteq P_k(L)$, we can use the property that $P_k(L) = P_k(L_1) \cap P_k(L_2) \cap L_k \subseteq L_k$. Thus, $L_k\subseteq P_k(L)$ if and only if $L_k\subseteq P_k(L_i)$, for $i=1,2$.
  \end{rem}

\subsection{An example}
  In this section, we demonstrate our approach on an example. To do this, let $G=G_1\|G_2$ be a system defined over an event set $E=\{a_1,a_2,c,u,u_1,u_2\}$ as a synchronous composition of systems $G_1$ and $G_2$ defined in Figure~\ref{figA}, where the set of uncontrollable events is $E_u=\{u,u_1,u_2\}$.
  \begin{figure}[ht]
    \centering
    \subfloat[Generator $G_1$.]{\label{figL1}\includegraphics[scale=.9]{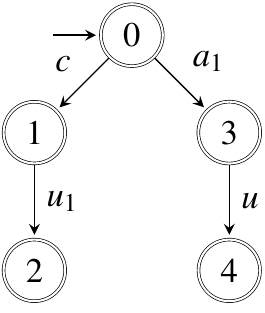}}
    \qquad\qquad
    \subfloat[Generator $G_2$.]{\label{figL2}\includegraphics[scale=.9]{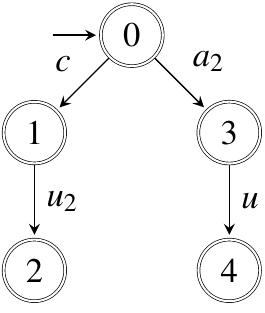}}
    \qquad\qquad
    \subfloat[Coordinator.]{\label{figLk}\includegraphics[scale=.9]{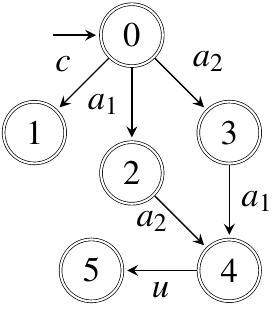}}
    \caption{Generators for $G_1$, $G_2$, and the coordinator.}
    \label{figA}
  \end{figure}
  The behaviors of these systems follow.
  \[
    L(G_1) = \overline{\{cu_1,a_1u\}},\ L(G_2) = \overline{\{cu_2,a_2u\}}
  \]
  and
  \[
    L(G) = \overline{\{a_1a_2u,a_2a_1u,cu_1u_2,cu_2u_1\}}\,.
  \]
  The specification language
  \[
    K=\overline{\{a_2a_1,a_1a_2u,cu_1u_2,cu_2u_1\}}
  \]
  is defined by the generator in Figure~\ref{figK}.
  \begin{figure}[ht]
    \centering
    \includegraphics{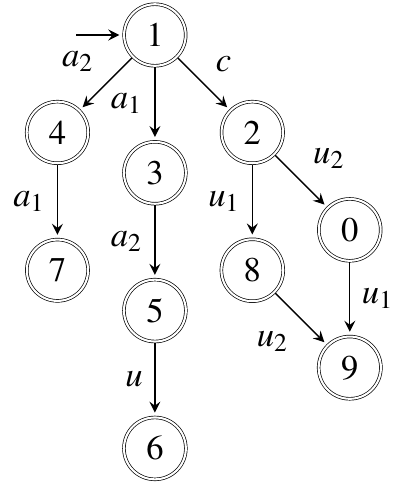}
    \caption{Generator for the specification language $K$.}
    \label{figK}
  \end{figure}

  Now, we need to find a coordinator, i.e., specifically its event set $E_k$. Note that $E_k$ has to contain both shared events $c$ and $u$. In addition, to make $K$ conditionally decomposable, at least one of $a_1$ and $a_2$ has to be added to $E_k$. Thus, we have ensured that $K$ is conditionally decomposable.

  Furthermore, the natural projections must satisfy observer and OCC properties. If $a_i\notin E_k$, for some $i\in\{1,2\}$, then $P^{i+k}_i$ is not OCC for $(P^{i+k}_i)^{-1}(L_i)$. Thus,
  \[
    E_k=\{a_1,a_2,c,u\}\,.
  \]

  Moreover, as we consider prefix-closed languages in this paper, and the coordinator plays a role in blocking issues, we choose the coordinator so that its behavior $L_k$ does not change the original system when composed together, i.e.,
  \[
    L(G_1\|G_2)\|L_k = L(G_1\|G_2)
  \]
  is satisfied, see Figure~\ref{figA}. Our choice is thus
  \[
    L_k = L(P_{1\cap k}^1(G_1)\|P_{2\cap k}^2(G_2))\,,
  \]
  which means that $L_k = \overline{\{c,a_1a_2u,a_2a_2u\}}$. The projections of $K$ are then the following languages:
  \begin{itemize}
    \item $P_k(K)     = \overline{\{a_2a_1,c,a_1a_2u\}}$,
    \item $P_{1+k}(K) = \overline{\{a_1a_2u,a_2a_1,cu_1\}}$, and
    \item $P_{2+k}(K) = \overline{\{a_1a_2u,a_2a_1,cu_2\}}$.
  \end{itemize}
  As mentioned above, it can be verified that the natural projections $P^{i+k}_k$ are $(P_i^{i+k})^{-1}(L_i)$-observers and OCC for the same language, for $i=1,2$. Therefore, we can compute the languages
  \begin{itemize}
    \item $\supc_k     = \overline{\{a_2,c,a_1a_2u\}}$,
    \item $\supc_{1+k} = \overline{\{a_1a_2u,a_2,cu_1\}}$,
    \item $\supc_{2+k} = \overline{\{a_1a_2u,a_2,cu_2\}}$,
  \end{itemize}
  as defined in Theorem~\ref{thm2}, whose synchronous product
  \[
    \supc_k \| \supc_{1+k} \| \supc_{2+k} = \overline{\{a_1a_2u,a_2,cu_1u_2,cu_2u_1\}}
  \]
  is the supremal conditionally controllable sublanguage of $K$, which is controllable by Theorem~\ref{thm3}. However, it can be verified that in this case the resulting language coincides with the supremal controllable sublanguage of $K$ with respect to $L(G)$ and $E_u$. Thus, using our approach, we have computed not only a controllable sublanguage of $K$, but the supremal one.

  Finally, note that the languages involved are not mutually controllable \cite{KvSGM08}, therefore the approach discussed in \cite{KvSGM08} cannot be used in this situation.

\section{Conclusion}\label{sec:conclusion}
  We have considered supervisory control of modular discrete-event systems with global specification languages. A coordination control framework has been adopted where, unlike the purely decentralized setting, a global layer with a coordinator acting on a subset of the global event set has been added. Based on this framework, two main results have been presented. First, a necessary and sufficient condition on a specification language to be exactly achieved in the coordination control architecture, called conditional controllability, has been proposed. Then, it has been shown how the supremal conditionally controllable sublanguage can be synthesized. Finally,  the relationship between supremal conditionally controllable sublanguages and supremal  controllable sublanguage has been investigated.

  In this paper, we have only been interested in the optimality of the control scheme, but blocking that is inherent to modular and, more generally, to our coordinated control synthesis has not been considered. It is then sufficient to choose a suitable coordinator event set and the coordinator itself need not impose any restriction on the behavior because its supervisor can take care of a required  restriction of the plant projected to the coordinator events. In a future work, however, it is our plan to address the blocking issue by considering a suitable coordinator and combine it with the three supervisors so that both blocking and maximal permissivity are handled at the same time within the coordination scheme.

  Thus, more work on coordination control dealing with global specification languages is needed. In particular, the synthesis of coordinators for nonblockingness is to be developed and the approach should be extended to partially observed modular plants.

\section*{Acknowledgment}
  A comment by Klaus Schmidt (U. Erlangen) is herewith gratefully acknowledged.

  This work has been supported by the EU.ICT 7FP project DISC no.~224498, and by the Academy of Sciences of the Czech Republic, Institutional Research Plan no. AV0Z10190503.

\bibliographystyle{plain}
\bibliography{supcc}

\end{document}